\newtheorem*{thm*}{Theorem}
\newtheorem*{conj*}{Conjecture}
\newtheorem*{remark}{Remark}
\newtheorem{thm}{Theorem}[section]
\newtheorem{lem}{Lemma}[section]
\newtheorem{cor}[thm]{Corollary}
\newtheorem{prop}[thm]{Proposition}
\newtheorem*{rmk}{Remark}
\newcommand{\ord}{\mathrm{ord}}
\newcommand{\Z}{\mathbb{Z}}
\newcommand{\Q}{\mathbb{Q}}
\newcommand{\N}{\mathbb{N}}
\newcommand{\F}{\mathbb{F}}
\newcommand{\pil}{\pi_{\lambda,p}}
\newcommand{\pilb}{\bar{\pi}_{\lambda,p}}
\newcommand{\tor}{\mathrm{tor}}
\newcommand{\numberlist}[2][0.8\linewidth]{%
  \{\parbox[t]{#1}{\printcommalist{#2}}%
  }
\newcommand{\printcommalist}[1]{%
  \begingroup\lccode`~=`,\lowercase{\endgroup\def~}{\mathcomma\penalty0 }%
  \mathcode`,="8000
  \thinmuskip=6mu plus 6mu minus 2mu
  $#1\}.$
}
\mathchardef\mathcomma=\mathcode`,
\newtheorem*{theorem*}{Theorem}
\renewcommand\expandafter\subsection\expandafter{%
    \expandafter\@fb@secFB\subsection
  }%
\numberwithin{equation}{section}
\newcommand\footnoteref[1]{\protected@xdef\@thefnmark{\ref{#1}}\@footnotemark}
\begin{document}
\title{On $L$-Functions of Modular Elliptic Curves and Certain $K3$ Surfaces}
\author{Malik Amir, Letong Hong}
\address{Department of Mathematics and Statistics, McGill University}
\address{Department of Mathematics, \'Ecole Polytechnique F\'ed\'erale de Lausanne}
\email{malik.amir.math@gmail.com}
\address{Department of Mathematics, Massachusetts Institute of Technology, Cambridge, MA 02139}
\email{clhong@mit.edu}
\keywords{Lucas Sequences, Lehmer's Conjecture, Modular Forms, $L$-Functions, Elliptic Curves, $K3$ Surfaces}

\begin{abstract} 
Inspired by Lehmer's conjecture on the nonvanishing of the Ramanujan $\tau$-function, one may ask whether an odd integer $\alpha$ can be equal to $\tau(n)$ or any coefficient of a newform $f(z)$. Balakrishnan, Craig, Ono, and Tsai used the theory of Lucas sequences and Diophantine analysis to characterize non-admissible values of newforms of even weight $k\geq 4$. We use these methods for weight $2$ and $3$ newforms and apply our results to $L$-functions of modular elliptic curves and certain $K3$ surfaces with Picard number $\ge 19$. In particular, for the complete list of weight $3$ newforms $f_\lambda(z)=\sum a_\lambda(n)q^n$ that are $\eta$-products, and for $N_\lambda$ the conductor of some elliptic curve $E_\lambda$, we show that if $|a_\lambda(n)|<100$ is odd with $n>1$ and $(n,2N_\lambda)=1$, then
\begin{align*}
a_\lambda(n) \in \numberlist[.65\linewidth]{-5,9,\pm 11,25, \pm41, \pm 43, -45,\pm47,49, \pm53,55, \pm59, \pm61, \pm 67, -69,\pm 71, \pm 73,75, \pm79,\pm81, \pm 83, \pm89,\pm 93 \pm 97, 99}
\end{align*}
Assuming the Generalized Riemann Hypothesis, we can rule out a few more possibilities leaving 
\begin{align*}
a_\lambda(n) \in \{-5,9,\pm 11,25,-45,49,55,-69,75,\pm 81,\pm 93, 99\}.
\end{align*}

\end{abstract}

\maketitle

\section{Introduction and Statement of the Results}
In an article entitled \textit{``On certain arithmetical functions''}, Ramanujan introduced the $\tau$-function in 1916, known as the Fourier coefficients of the weight $12$ modular form $$\Delta(z)=q\prod_{n=1}^\infty (1-q^n)^{24}:=\sum_{n=1}^\infty \tau(n)q^n=q-24q^2+252q^3-1472q^4+4830q^5-...$$ where throughout $q:=e^{2\pi i z}$. It was conjectured by Ramanujan that the $\tau$-function is multiplicative and this offered a glimpse into a much more general theory known today as the theory of Hecke operators. Despite its importance in the large web of mathematics and physics, basic properties of $\tau(n)$ are still unknown. The most famous example is Lehmer's conjecture about the nonvanishing of $\tau(n)$. Lehmer proved that if $\tau(n)=0$, then $n$ must be a prime \cite{Lehmer}.  One may be interested in studying odd values taken by $\tau(n)$ or any newform coefficients. This is the question we consider as a variation of Lehmer's original speculation.

 For an odd number $\alpha$, Murty, Murty and Shorey \cite{MMS} proved using linear forms in logarithms that $\tau(n)\neq\alpha$ for all $n$ sufficiently large. However, the bounds that they obtained are huge and computationally impractical. Recently, using the theory of Lucas sequences, Balakrishnan, Craig, Ono and Tsai proved in \cite{BCO} and \cite{BCOT} that $$\tau(n)\not\in\{\pm1,\pm3,\pm5,\pm7,\pm13,\pm17,-19,\pm23,\pm37,\pm691\}.$$ Their methods apply also to newforms of even weights $k\geq 4$ and they obtained a number of further consequences. For example, they determine infinitely many spaces for which the primes $3\leq \ell\leq 37$ are not absolute values of coefficients of any newform with integer coefficients.

In this paper, we consider weights $k\leq 3$ and $L$-functions arising from weight $2$ newforms and weight $3$ newforms corresponding to modular elliptic curves and a special family of $K3$ surfaces $\{X_\lambda\}$ with Picard number $\ge 19$. We then study an infinite family $\{L(X_\lambda,s)\}$ of $L$-functions associated to these surfaces and determine some of their inadmissible coefficients.

We begin by recalling the celebrated modularity theorem. For $E/\Q$ an elliptic curve with conductor $N$ given by the equation
\begin{equation}
\label{eqn:MT}
    E:y^2=x^3+a_4x+a_6, \qquad(a_4,a_6\in\Z)
\end{equation} 
 there exists a newform $f_E(z)=\sum_{n\geq 1}a_E(n)q^n$ of weight $2$ and level $N$ for which the $p^{th}$ Fourier coefficient satisfies $a_E(p)=p+1-\#E(\F_p)$ whenever $p\nmid 2N$. The next theorem determines some odd values $|\alpha|<101$ which cannot be Fourier coefficients of $f_E(z)$. 
\begin{thm}{\label{1.1}}
Suppose that $E/\mathbb{Q}$ is an elliptic curve of conductor $N$ with a rational $2$-torsion point. For all integers $n>1$ satisfying $\gcd (n,2N)=1$, the following are true.
\begin{enumerate}
    \item If $E/\Q$ also has a rational $3$-torsion point and $3\nmid n$, then $$a_E(n)\not \in \{5, -7, 11, -13, 17, 23, -25, 35, -37,-49, -55, 65,  -73,77, -85, -91, -97\}.$$ Assuming GRH, we have $$a_E(n)\not \in \{-43, 47, 53, 59, -61, -67, 71, -73, -79, 83, 89\}.$$
    \item If $E/\Q$ also has a rational $5$-torsion point and $5\nmid n$, then $$a_E(n)\not \in \{ -11, -31, -41, -61, -71, -101\}.$$
\end{enumerate}
\end{thm}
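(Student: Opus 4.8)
The plan is to follow the Lucas-sequence strategy of Balakrishnan--Craig--Ono--Tsai, exploiting the prescribed torsion to manufacture both a parity obstruction and a congruence obstruction. First I would record the arithmetic consequences of the rational torsion via modularity. If $P\in E(\Q)$ has order $2$, then for every good prime $p$ its reduction gives $2\mid\#E(\F_p)$, so $a_E(p)=p+1-\#E(\F_p)$ is \emph{even} for all $p\nmid 2N$. Likewise a rational $3$-torsion (resp. $5$-torsion) point forces $3\mid\#E(\F_p)$ (resp. $5\mid\#E(\F_p)$) once $p\ne 3$ (resp. $p\ne 5$), whence $a_E(p)\equiv p+1\pmod 3$ (resp. $\pmod 5$).

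Next I would translate the Hecke relations into a Lucas sequence. For $p\nmid N$ the coefficients satisfy $a_E(p^{m+1})=a_E(p)\,a_E(p^m)-p\,a_E(p^{m-1})$, so $a_E(p^m)=U_{m+1}(a_E(p),p)$, the Lucas sequence with companion polynomial $x^2-a_E(p)x+p$. Reducing the recursion modulo $2$ and using that $a_E(p)$ is even while $p$ is odd shows that $U_n$ is odd exactly when $n$ is odd; hence $a_E(p^m)$ is odd if and only if $m$ is even. By multiplicativity, $a_E(n)$ odd with $\gcd(n,2N)=1$ forces every prime to occur in $n$ to an even power, and $a_E(n)=\prod_{p^{2d}\|n}U_{2d+1}(a_E(p),p)$ is a product of odd integers. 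Since the target value $\alpha$ has $|\alpha|<101$, I would then distribute its few odd divisors among these factors, reducing the whole problem to the solvability of finitely many equations $U_{2d+1}(a_E(p),p)=\delta$ with $\delta\mid\alpha$, subject to $a_E(p)$ even and to Hasse's bound $|a_E(p)|<2\sqrt p$.

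The decisive step is the index-$3$ case $n=p^2$, i.e. $U_3=a_E(p)^2-p=\delta$, which by itself admits infinitely many candidate pairs $(a_E(p),p)$ and cannot be cut down by Hasse alone. Here the torsion congruence finishes the job: writing $a:=a_E(p)$ and $p=a^2-\delta$, the relation $a\equiv p+1\pmod 3$ becomes $a^2-a\equiv \delta-1\pmod 3$, and since $a^2-a\in\{0,2\}\pmod 3$ this is unsolvable precisely when $\delta\equiv 2\pmod 3$; the analogous computation modulo $5$ rules out $\delta\equiv 4\pmod 5$ in part (2). This is exactly why every value appearing in the theorem is $\equiv 2\pmod 3$ (resp. $\equiv 4\pmod 5$). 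For the higher even powers $d\ge 2$ I would invoke the Bilu--Hanrot--Voutier primitive-divisor theorem to bound the index $2d+1$, and then clear the finitely many surviving small-index equations---which become Thue or Lehmer-type equations---using the Hasse bound together with the same congruences.

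The main obstacle, and the place where the argument splits into its unconditional and GRH-conditional halves, is the bookkeeping of composite targets and the residual small-index cases: when $\alpha$ factors nontrivially (as with $\pm 25,35,55,65,77,85,91$) one must also exclude every way of writing $\alpha$ as a product of Lucas values, including spurious factors equal to $\pm 1$, and for certain primes the unconditional primitive-divisor input leaves a handful of candidate primes $p$ that cannot be killed by congruences and Hasse alone. For those cases I would appeal to an effective form of Chebotarev under GRH to bound the relevant primes and conclude by a finite check; this is precisely the extra leverage that promotes the second list of values. Part (2) then runs verbatim with $3$ replaced by $5$ and the hypothesis $5\nmid n$ in place of $3\nmid n$.
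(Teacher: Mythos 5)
Your overall strategy is the paper's: the rational $2$-torsion point gives the parity statement ($a_E(p^m)$ is odd iff $m$ is even, so $n$ is a product of even prime powers), the rational $3$- or $5$-torsion point gives $a_E(p)\equiv p+1\pmod{\ell}$ via injectivity of torsion under reduction, and this congruence is exactly what disposes of the index-$3$ curve $a_E(p)^2-p=\delta$, which is the degenerate Thue equation with infinitely many integer points. The paper packages the congruence as $a_E(p^{d-1})\equiv 1+p+\cdots+p^{d-1}\pmod{\ell}$, which forces $p\equiv 1$ and $d\equiv -1\pmod{\ell}$ whenever $\delta\equiv -1\pmod{\ell}$; your computation for $d=3$ is an equivalent special case. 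The passage to the remaining indices via Bilu--Hanrot--Voutier, Proposition \ref{PropB} and explicit Thue equations also matches.

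Two points do not survive scrutiny. First, your explanation of where GRH enters is wrong: there is no effective Chebotarev step in this argument, and a density bound on primes would not help, since after the congruences one is left with finitely many explicit equations $F_{d-1}(X,Y)=\pm\ell$ and the task is to certify their \emph{complete} lists of integer solutions. For $41\le \ell\le 97$ the class-group and unit-group computations underlying the Thue solvers are rigorous only under GRH, and that is the sole reason the second lists are conditional (see Table \ref{thueGRHtable}); an ``effective Chebotarev plus finite check'' step would not produce these lists. Second, your congruence only eliminates divisors $\delta\equiv -1\pmod{\ell}$ of the target, so for a composite value such as $35=5\cdot 7$ or $-49=7\cdot(-7)$ the factor congruent to $1\pmod 3$ still sits on the infinite-solution curve $a^2-p=\delta$. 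The paper's resolution is to show that the \emph{complementary} factor, which is $\equiv -1\pmod 3$, cannot be realized as any $a_E(q^{s-1})$ because no admissible index $s$ (odd prime, $s\equiv -1\pmod 3$, $s\mid \delta'(\delta'^2-1)$) survives, so the target admits no proper factorization into admissible values and one is reduced to $n=p^{d-1}$. You gesture at this bookkeeping but the argument as written leaves that case open.
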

\begin{remark}
Balakrishnan, Craig, Ono and Tsai considered in \cite{BCO} and \cite{BCOT} newforms of even weights $k\geq 4$. For weights $k<4$, their method encounters difficulties arising from equations with infinitely many integer solutions. The proof of Theorem \ref{1.1} shows how the existence of a rational $3$-torsion or a rational $5$-torsion point helps derive congruence properties that eliminate such degenerate cases.
\end{remark}
We also consider a special one-parameter family $\{X_\lambda\}$ of $K3$ surfaces\footnote{See Section \ref{ZetaFunction} for basic definitions and related results.} introduced by Ahlgren, Ono, and Penniston in \cite{AOP}, whose function field corresponds to 
\begin{equation}
    X_{\lambda}: s^2=xy(x+1)(y+1)(x+\lambda y).
\end{equation}
As the Picard number of $X_{\lambda}$ is generically $19$, they proved that the local zeta-function at primes $p$ of good reduction is given by $$Z(X_\lambda/\F_p,x)=\frac{1}{(1-x)(1-p^2x)(1-px)^{19}(1-\gamma px)(1-\gamma\pil^2x)(1-\gamma\pilb^2x)}.$$ Here, $\gamma=\left(\frac{\lambda+1}{p}\right)$ denotes the Legendre symbol. For $\lambda\in\mathbb{Q}\setminus\{0,-1\}$, Ahlgren, Ono and Penniston defined in \cite{AOP} the $L$-function of $X_\lambda$ by 
 \begin{equation}\label{link}
     L(X_\lambda ,s)=\sum_{ n\ge 1}\frac{a_{\lambda}(n)}{n^s}:=\prod_{p\nmid \lambda(\lambda+1)} \frac{1}{1-a_{\lambda}(p)p^{-s}+ p^{2-2s}},
 \end{equation}
  where $a_{\lambda}(p):= \gamma(\pil^2+\pilb^2)$ and $\pil,\pilb$ are Galois conjugates satisfying $$|\pil+\pilb|\le 2\sqrt{p}\hspace{5mm}\text{ and }\hspace{5mm}\pil\pilb=p.$$ 
  This family of $K3$ surfaces is particularly interesting. A special subset of them correspond to weight $3$ newforms with complex multiplication. By the theory of Inose and Shioda \cite{IS, Shioda}, it is known that symmetric squares of elliptic curves with complex multiplication offer $K3$ surfaces that correspond to weight $3$ CM newforms, these are the so-called \textit{singular} $K_3$ surfaces. Therefore, we seek the specializations of $X_{\lambda}$. Moreover, it is natural to consider variants of Lehmer's conjecture for all of the $\{L(X_\lambda,s)\}$.
  
  For $\lambda\in \{1, 8,1/8, -4, -1/4,-64,-1/64\}$, we denote $f_\lambda(z)=\sum_{n\ge 1}\left(\frac{D}{n}\right)a_{\lambda}(n)q^n$ to be the twist (maybe trivial) of one of the only four weight $3$ newforms given by an $\eta$-product \cite{Martin} with complex multiplication by $\Q(\sqrt{-2})$, $\Q(i)$, $\Q(\sqrt{-3})$ and $\Q(\sqrt{-7})$ respectively.
\vspace{2mm}
\begin{equation}{\label{eta}}
f_\lambda(z)=\left\{
	\begin{array}{ll}
	\vspace{2mm}
		  	\eta^2(z)\eta(2z)\eta(4z)\eta^2(8z)\otimes \chi_{-4}	& \mbox{if } \lambda=1,\\
		  	\vspace{2mm}
		  	\eta^6(4z)   & \mbox{if } \lambda=8, \\
		  \vspace{2mm}
		    \eta^6(4z)\otimes \chi_8   & \mbox{if } \lambda= \frac18, \\
		  \vspace{2mm}
		 	\eta^3(2z)\eta^3(6z)& \mbox{if } \lambda=-4,\\
	\vspace{2mm}
		 \eta^3(2z)\eta^3(6z)\otimes \chi_{-4}   & \mbox{if } \lambda=-\frac14, \\
			\vspace{2mm}
			\eta^3(z)\eta^3(7z) & \mbox{if } \lambda=-64,\\
			\vspace{2mm}
		\eta^3(z)\eta^3(7z)\otimes \chi_{-4}& \mbox{if } \lambda=-\frac{1}{64}.
	\end{array}
\right.
\end{equation}

Ahlgren, Ono and Penniston proved the following definitive theorem concerning the values $\lambda$ for which $X_\lambda$ is singular.
\begin{theorem*}[Theorem 1.2 of \cite{AOP}]\label{ClassificationTheorem}
For $\lambda\in \Q\setminus\{0,-1\}$, $X_\lambda$ is singular if and only if $$\lambda\in \{1,8,1/8,-4,-1/4,-64,-1/64\}$$
\end{theorem*}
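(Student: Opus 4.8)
The plan is to reduce the geometric statement about the Picard number to a question about complex multiplication of an auxiliary elliptic curve, and then to a finite Diophantine search. Recall that $X_\lambda$ is singular precisely when its geometric Picard number attains the maximal value $20$, equivalently when the transcendental lattice drops from its generic rank $3$ to rank $2$. From the shape of the local zeta-function recorded above, the nontrivial Frobenius eigenvalues on the transcendental part are $\gamma\pil^2$ and $\gamma\pilb^2$, together with the near-algebraic class with eigenvalue $\gamma p$. Since $\pil\pilb=p$ and $|\pil+\pilb|\le 2\sqrt p$, the pair $(\pil,\pilb)$ is exactly a pair of weight-$2$ Frobenius eigenvalues, so I would first produce an explicit elliptic curve $E_\lambda/\Q$, as a function of $\lambda$, whose Frobenius eigenvalues are $\pil,\pilb$ for almost all $p$. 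Concretely, one extracts $E_\lambda$ from an elliptic fibration on $X_\lambda$ (choosing one of $x,y$ as a parameter and analyzing the Kodaira fibers), and checks via the identity $\pil^2+\pilb^2=(\pil+\pilb)^2-2p=a_{E_\lambda}(p)^2-2p$ that $a_\lambda(p)=\gamma\bigl(a_{E_\lambda}(p)^2-2p\bigr)$, i.e. that the weight-$3$ form attached to $X_\lambda$ is the transcendental part of $\operatorname{Sym}^2 f_{E_\lambda}$ twisted by the quadratic character governing $\gamma=\leg{\lambda+1}{p}$.

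The second step is the equivalence at the heart of the theorem: $X_\lambda$ is singular if and only if $E_\lambda$ has complex multiplication. One direction uses the Inose--Shioda theory recalled above: a singular $K3$ surface corresponds to a weight-$3$ CM newform, and a form of the shape $\operatorname{Sym}^2 f_E\otimes\chi$ is CM exactly when $E$ is. For the converse I would argue that when $E_\lambda$ is non-CM the representation $\operatorname{Sym}^2 f_{E_\lambda}$ is irreducible, so the Hodge structure on the rank-$3$ transcendental lattice is irreducible, its middle $(1,1)$-class stays transcendental, and hence $\rho=19$; when $E_\lambda$ has CM, $\operatorname{Sym}^2 f_{E_\lambda}$ splits off a rank-$1$ piece, the class $\gamma p$ becomes algebraic over $\overline{\Q}$, the transcendental lattice drops to rank $2$, and $X_\lambda$ is singular. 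The four admissible CM fields $\Q(\sqrt{-2}),\Q(i),\Q(\sqrt{-3}),\Q(\sqrt{-7})$ appearing in \eqref{eta} are then forced to be exactly the imaginary quadratic fields that can arise for this family.

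Finally I would make the search finite and explicit. A CM elliptic curve over $\Q$ has $j$-invariant in the finite list of rational CM $j$-invariants, so writing $j(E_\lambda)$ as an explicit rational function of $\lambda$ and solving $j(E_\lambda)=j_{\mathrm{CM}}$ over $\Q$ for each such value yields only finitely many candidate $\lambda$. Matching these against the four CM fields, discarding the degenerate values $\lambda\in\{0,-1\}$, and using the twisting symmetry $\lambda\leftrightarrow 1/\lambda$ (which accounts for the paired values $\{8,1/8\}$, $\{-4,-1/4\}$, $\{-64,-1/64\}$ and fixes $\lambda=1$) should recover precisely the seven values in the statement.

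I expect the main obstacle to be Step 1 together with the careful bookkeeping in Step 2: explicitly identifying $E_\lambda$, equivalently $j(E_\lambda)$, from the geometry of $X_\lambda$, and rigorously controlling the passage between the geometric Picard number over $\overline{\Q}$ and the Frobenius data over each $\F_p$. The delicate point is that the eigenvalue $\gamma p$ is of the form ``$p$ times a unit,'' so the Tate conjecture makes it algebraic over $\overline{\F_p}$ and raises the Picard number in characteristic $p$ for every prime; one must check that over $\overline{\Q}$ the corresponding class remains transcendental at generic $\lambda$, so that the jump of the transcendental lattice to rank $2$ genuinely detects the CM locus and nothing else.
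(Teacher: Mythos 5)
First, a point of orientation: this statement is not proved in the paper at all --- it is imported verbatim as Theorem 1.2 of \cite{AOP}, and the only indications given of its proof are the remark that it ``relies on connections between the $K3$ surfaces $\{X_\lambda\}$, a special family of elliptic curves $\{E_\lambda\}$\dots and the deep work of Shioda,'' and the later sentence that the proof ``is dictated by the complex multiplication underlying $E_\lambda$ and the fact that for a fixed prime $\ell$, there are no sets of primes $p$ with positive density satisfying (\ref{mainformula}) modulo $\ell$ \cite{OS,Ribet}.'' So your proposal can only be measured against Ahlgren--Ono--Penniston's argument as the paper sketches it. Your overall skeleton matches theirs: exhibit $E_\lambda$ and the identity $a_\lambda(p)=\gamma\bigl(b_\lambda(p)^2-2p\bigr)$, prove that $X_\lambda$ is singular if and only if $E_\lambda$ has CM, and then reduce to the finite list of rational CM $j$-invariants. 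Where you genuinely diverge is the ``only if'' direction. You argue Hodge-theoretically (irreducibility of $\operatorname{Sym}^2$ of a non-CM representation forces the transcendental lattice to stay at rank $3$), whereas the argument the paper points to is arithmetic: if $X_\lambda$ were singular, Shioda--Inose/Livn\'e would attach to it a weight $3$ CM newform whose coefficients agree with $\gamma(b_\lambda(p)^2-2p)$ on all good primes, and for non-CM $E_\lambda$ this produces congruences mod $\ell$ on a positive-density set of primes that Ono--Skinner and Ribet rule out. Both routes are viable; the arithmetic one avoids having to justify that the rank-$3$ piece cut out by the zeta function really is the full transcendental lattice over $\overline{\Q}$, which you correctly flag as the delicate point but do not resolve. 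To your credit, you also correctly isolate the $\overline{\F}_p$-versus-$\overline{\Q}$ Picard number issue (the class with Frobenius eigenvalue $\gamma p$ is algebraic in every finite characteristic by Tate), which is exactly why the zeta function alone cannot certify $\rho=19$ generically and why AOP need the explicit geometric input of $19$ independent divisor classes.

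Two concrete soft spots remain in your write-up. First, the assertion that ``a form of the shape $\operatorname{Sym}^2 f_E\otimes\chi$ is CM exactly when $E$ is'' is doing real work in both directions: the non-CM direction needs Serre's open-image theorem (or the Ribet/Ono--Skinner statement the paper cites), and the CM direction needs care with the twist $\gamma=\leg{\lambda+1}{p}$ at primes inert in the CM field, where $b_\lambda(p)=0$ and $\pil^2+\pilb^2=-2p$ rather than $0$; the identification of $a_\lambda$ with a CM newform only works after the correct quadratic twist, which is precisely why the seven values come in twist-pairs in (\ref{eta}). Second, in the final search you treat the four CM fields as given, but they are an output: one must solve $j(E_\lambda)=j$ for each of the thirteen rational CM $j$-invariants and observe that only those attached to $\Q(\sqrt{-2})$, $\Q(i)$, $\Q(\sqrt{-3})$, $\Q(\sqrt{-7})$ yield $\lambda\in\Q\setminus\{0,-1\}$. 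Neither issue is fatal, but both must be addressed before the proposal is a proof.
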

The proof of this result relies on connections between the $K3$ surfaces $\{X_\lambda\}$, a special family of elliptic curves $\{E_\lambda\}$ defined in (\ref{EC}), and the deep work of Shioda. For these weight $3$ Hecke eigenforms, we prove the following variant of Lehmer's speculation.

\begin{thm}\label{1.2}
For all integers $n>1$ satisfying $\gcd(n, 2N_{\lambda})=1$, the following are true. 
\begin{enumerate}
\item If $\lambda=1$, then we have
\begin{align*}
a_\lambda(n)\not \in \numberlist[.71\linewidth]{\pm 1,\pm 3,5, \pm 7,\pm 9, 11, \pm13,\pm15, \pm 17, \pm 19,\pm21, \pm23,\pm25,\pm27 \pm29, \pm 31,\pm33,\pm35,\pm 37,\pm39,\pm 45,-49,\pm51,-55, \pm57,\pm63,\pm65,\pm69,-75,\pm77,\pm85,\pm87,\pm91,\pm95,\pm99}
\end{align*}
Assuming \textit{GRH}, we have 
\begin{align*}
    a_\lambda(n)\not \in \numberlist[.81\linewidth]{\pm41, \pm 43, \pm47, \pm53, \pm59, \pm61, \pm 67, \pm 71, \pm 73, \pm79, \pm 83, \pm89, \pm 97}
\end{align*}
    
\item If $\lambda\in \{8, 1/8\}$, then we have
\begin{align*}
a_\lambda(n)\not \in \numberlist[.73\linewidth]{\pm 1,\pm 3,\pm5, \pm7,-9, -11, \pm 13,\pm15, \pm 17, \pm 19,\pm21, \pm23,\pm25,\pm27, \pm29, \pm 31,\pm33,\pm35,
\pm37,\pm39,\pm 45,-49,\pm51,\pm55,\pm57,\pm63,\pm65,69,\pm 75,\pm77,\pm85,\pm87,\pm91,\pm95,-99}
\end{align*}
Assuming \textit{GRH}, we have 
\begin{align*}
    a_\lambda(n)\not \in \numberlist[.81\linewidth]{\pm 41, \pm 43, \pm47, \pm53, \pm59, \pm 61, \pm 67, \pm 71, \pm73, \pm 79, \pm 83, \pm89, \pm 97}
\end{align*}

\item If $\lambda\in \{-4, -1/4\}$, then we have
\begin{align*}
a_\lambda(n)\not \in \numberlist[.71\linewidth]{\pm 1,\pm 3,\pm5, \pm 7,\pm9,\pm 11,\pm 13,\pm15, \pm 17, \pm 19,\pm21, \pm23,-25,\pm27 \pm29, \pm 31,\pm33,\pm35,
\pm 37,\pm39,45,\pm49,\pm51,\pm55,\pm57,\pm63,\pm65,\pm69,\pm 75,\pm77,\pm85,\pm87,\pm91,\pm95,\pm99}
\end{align*}
Assuming \textit{GRH}, we have 
\begin{align*}
    a_\lambda(n)\not \in \numberlist[.81\linewidth]{\pm41, \pm 43, \pm47, \pm53, \pm59, \pm 61, \pm 67, \pm 71,\pm73, \pm79, \pm83, \pm89, \pm 97}
\end{align*}

    \item If $\lambda\in \{-64, -1/64\}$, then we have
    \begin{align*}
a_\lambda(n)\not \in \numberlist[.73\linewidth]{\pm 1,\pm 3,\pm5, \pm7,-9, \pm11, \pm 13,\pm15, \pm 17, \pm 19, \pm21,\pm 23,-25,\pm27 \pm29, \pm 31,\pm33,\pm35,
\pm37,\pm39,\pm 45,\pm49,\pm51,\pm55,\pm57,\pm63,\pm65,\pm69,-75,\pm77,\pm85,\pm87,\pm91 ,\pm95,\pm99}
\end{align*}
Assuming \textit{GRH}, we have 
\begin{align*}
    a_\lambda(n)\not \in \numberlist[.81\linewidth]{\pm41, \pm 43, \pm47, \pm 53, \pm59, \pm61, \pm 67, \pm 71, \pm 73, \pm79, \pm83, \pm89, \pm 97}
\end{align*}    
\end{enumerate}
\end{thm}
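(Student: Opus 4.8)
The plan is to run the Lucas-sequence and Diophantine machinery of Balakrishnan--Craig--Ono--Tsai in the weight $3$ setting, using the complex multiplication of each $f_\lambda$ to defeat the degeneracies that obstruct the method in low weight. First I would reduce to prime powers: since each $f_\lambda$ is a Hecke eigenform and $a_\lambda$ is multiplicative on integers prime to $2N_\lambda$, an identity $a_\lambda(n)=\alpha$ with $n>1$ and $\gcd(n,2N_\lambda)=1$ forces $a_\lambda(p^m)=d$ for some prime power $p^m\parallel n$ and some odd divisor $d\mid\alpha$ with $1<|d|\le|\alpha|<101$. The weight-$3$ Hecke relation $a_\lambda(p^{m+1})=a_\lambda(p)\,a_\lambda(p^m)-p^{2}a_\lambda(p^{m-1})$ (up to the nebentypus sign) identifies $a_\lambda(p^m)$ with the term $U_{m+1}$ of the Lucas sequence attached to the pair $(a_\lambda(p),p^{2})$; note that the companion parameter is the perfect square $p^{2}$, which is special to weight $3$. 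Thus it suffices to show that for every odd $d$ dividing one of the listed $\alpha$ there is no admissible pair $(p,m)$ with $a_\lambda(p^m)=d$.

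Next I would split on the splitting behaviour of $p$ in the CM field $K_\lambda\in\{\Q(\sqrt{-2}),\Q(i),\Q(\sqrt{-3}),\Q(\sqrt{-7})\}$, each of class number one. For $p$ inert one has $a_\lambda(p)=0$, hence $a_\lambda(p^{2j+1})=0$ and $a_\lambda(p^{2j})=\pm p^{2j}$; an odd value in this family with $|d|<101$ forces $p^{2j}<101$, leaving only finitely many $(p,j)$ to inspect. For $p$ split, write $(p)=\mathfrak{p}\bar{\mathfrak{p}}$ with $\pil\pilb=p$ and $t:=\pil+\pilb\in\Z$, so the excerpt's formula gives $a_\lambda(p)=\gamma(\pil^2+\pilb^2)=\gamma(t^2-2p)$ with $\gamma=\left(\frac{\lambda+1}{p}\right)$. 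Expressing $\pil$ in an integral basis of $\mathcal{O}_{K_\lambda}$ converts $a_\lambda(p)=d$ into a binary quadratic norm-form equation of the shape $u^2-Dv^2=2\gamma d$, coupled to the primality of $p=N(\pil)$.

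For $m\ge 2$ the governing equation has high degree: after substitution, $U_{m+1}$ becomes a binary form $F_m(u,v)$ of degree $2m\ge 4$, and $F_m(u,v)=\pm d$ is a genuine Thue equation with only finitely many solutions. The bound $|d|<101$ together with the growth of $U_{m+1}$ forces $p$ and $m$ into an explicit finite range (already for $m=2$ one has $U_3=(t^2-3p)(t^2-p)$, whose two factors differ by $2p$, so $|d|\ge p$ unless a factor degenerates), after which a finite computation disposes of every case. This mirrors the weight-$\ge 4$ situation of \cite{BCO,BCOT}, where even $m=1$ yields a Thue equation of degree $k-1\ge 3$. The new difficulty is that for weight $3$ and $m=1$ the equation $u^2-Dv^2=2\gamma d$ is an \emph{indefinite} binary quadratic (Pell-type) form which, when solvable, has infinitely many solutions. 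This is the obstruction flagged in the Remark, and it is the main hurdle.

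I would resolve the $m=1$ case as follows. Solvability of $u^2-Dv^2=2\gamma d$ is governed by genus theory and local (quadratic-residue) conditions; whenever $\pm 2d$ is not represented by the relevant form, the value $d$ is eliminated at once. When solutions do exist, I would exploit the complex multiplication twice more: the sign $\gamma=\left(\frac{\lambda+1}{p}\right)$ and the requirement that $p=N(\pil)$ be a split prime coprime to $2N_\lambda$ impose congruences that are constant along each Pell orbit, and in the favourable cases these force every resulting $p$ into a residue class incompatible with the needed value of $\gamma$ or with $p$ being an admissible split prime, eliminating $d$ unconditionally. The residual values, for which no such congruence obstruction is available, are exactly those removed only under GRH: there an effective Chebotarev/least-split-prime estimate (equivalently an effective bound on the smallest prime represented by a prescribed class of forms) is used to close the remaining candidates. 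Assembling the inert check, the finite Thue computations for $m\ge 2$, and the $m=1$ analysis over all odd $d\le 100$ and all seven $\lambda$ then yields the four unconditional lists, while the GRH-assisted eliminations give the smaller lists.
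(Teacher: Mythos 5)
Your overall skeleton (reduction to prime powers by multiplicativity, the Lucas sequence with companion parameter $B=\chi(p)p^{2}$, Thue equations for large exponents) matches the paper, but you have misplaced the central difficulty, and your proposed fix for it would not work. The paper first proves (Lemma \ref{TM2}) that $a_\lambda(p)$ is \emph{even} for every $p\nmid 2N_\lambda$: the point $(1,0)$ is a rational $2$-torsion point of $E_\lambda$, so $\#E_\lambda(\F_p)$ is even, $b_\lambda(p)=p+1-\#E_\lambda(\F_p)$ is even, and $a_\lambda(p)=\gamma(b_\lambda(p)^2-2p)$ is even; consequently $a_\lambda(p^m)$ is odd if and only if $m$ is even. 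Your ``main hurdle'' --- the case $m=1$, $a_\lambda(p)=d$ with $d$ odd, which you attack with norm forms, genus theory, congruences along Pell orbits and effective Chebotarev --- is therefore vacuous, and that machinery is neither needed nor, as described, capable of closing an infinite Pell orbit: a congruence obstruction that is ``constant along the orbit'' need not exist for the surviving $d$, and a least-split-prime estimate does not bound the solutions of an indefinite binary quadratic equation. The degeneracy that actually threatens weight $3$ sits at $m=2$: $a_\lambda(p^2)=a_\lambda(p)^2-\chi(p)p^2=\pm\ell$. The paper resolves it with the elementary observation you mention but never deploy: $p^{k-1}=p^{2}$ is a perfect square, so this equation is either $(a_\lambda(p)-p)(a_\lambda(p)+p)=\pm\ell$ or $a_\lambda(p)^2+p^2=\ell$, each with finitely many solutions by factorization --- no Pell analysis and no CM splitting are required (indeed the same argument yields Theorem \ref{1.3} for \emph{all} $\lambda$, not just the seven singular ones). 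Combined with Bilu--Hanrot--Voutier and Proposition \ref{PropB} (to force $d=m+1$ to be an odd prime dividing $\ell(\ell^2-1)$, or $9$ in the composite analysis) and the tables showing $F_{2m}(X,Y)=\pm\ell$ has no solutions with $X,Y$ both squares for $m\ge 3$, only $n=p^2$ and $n=p^4$ survive; the role of CM for the seven special $\lambda$ is merely to check candidate solutions against the actual coefficients of the $\eta$-products in the LMFDB.

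Two further gaps. First, GRH enters the paper only to certify the complete solution lists of the Thue equations $F_{d-1}(X,Y)=\pm\ell$ for $41\le\ell\le 97$ (Table \ref{thueGRHtable}), a unit-group computation issue inside the Thue solver, not through any effective Chebotarev or least-prime estimate. Second, your reduction for composite $\alpha$ (``show that no odd divisor $d$ of $\alpha$ is attained'') fails as stated: several listed composite values have divisors that \emph{are} Fourier coefficients (e.g.\ $a_1(3^2)=-5$ and $a_1(7^2)=49$, while $-49$ and $-55$ are excluded for $\lambda=1$), so one needs the paper's finer argument that no \emph{proper factorization} of $\alpha$ into two admissible odd values exists, together with the special treatment of $\alpha=25$ for $\lambda=1$, where $25=(-5)\cdot(-5)$ is ruled out only because $-5$ occurs uniquely at $n=3^2$.
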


\begin{rmk}
Some of the omitted values above are in fact Fourier coefficients of these $\eta$-products.

\noindent
(i) For $\lambda=1$, we have $a_1(9)=-5,a_1(3^4)=-11,a_1(7^2)=49,a_1(27^2)=55,a_1(11^2)=75$. 

\smallskip
\noindent
(ii) For $\lambda\in \{8, \frac{1}{8}\}$, we have $a_\lambda(3^2)=9,a_\lambda(5^2)=11,a_\lambda(7^2)=49,a_\lambda(13^2)=-69,a_\lambda(3^25^2)=99$.

\smallskip
\noindent
(iii) For $\lambda\in \{-4,-\frac{1}{4}\}$, we have $a_\lambda(5^2)=25,a_\lambda(7^2)=-45$.

\smallskip
\noindent
(iv) For $\lambda\in \{-64,-\frac{1}{64}\}$, we have $a_\lambda(3^2)=9,a_\lambda(5^2)=25,a_\lambda(37^2)=75$. 
\end{rmk}
It turns out that the proof of Theorem \ref{1.2} does not rely on the modularity of $X_\lambda$. It rather depends on the pair of Galois conjugates coming from the family of elliptic curves $\{E_\lambda\}$ with conductor $N_\lambda$ defined by
\begin{equation}\label{EC}
    E_\lambda: y^2=\left(x-1\right)\left(x^2-\frac{1}{\lambda+1}\right),\qquad \lambda\in \Q\setminus\{0, -1\}.
\end{equation}
As a consequence, we obtain the following theorem. 
\begin{thm}\label{1.3}
For all $\lambda \in \Q\setminus\{0,-1\}$ and $n> 1$ satisfying $\gcd(n, 2N_\lambda)=1$, we have 
\begin{align*}
a_\lambda(n)\not \in \numberlist[0.48\linewidth]{\pm 1,\pm 3, 5, -7,-15, \pm 17, \pm 19,21, -23, -27, -29,\, \pm 31, \,33, \,37,\,-39,\,-51, 57, 69,-87}
\end{align*}
Assuming \textit{GRH}, we have 
\begin{align*}
    a_\lambda(n)\not \in \numberlist[.62\linewidth]{-41, \pm 43, -47, -53, -59, \pm 67, \pm 71, 79, -83, -89, \pm 97}
\end{align*}    
\end{thm}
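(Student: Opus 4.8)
The plan is to transport the Lucas-sequence machinery of Balakrishnan--Craig--Ono--Tsai to this weight-$3$ setting, using the elliptic curves $\{E_\lambda\}$ to supply the arithmetic input that their method lacked for $k\le 3$. The first step is to read off the shape of the coefficients from $E_\lambda$. Since the conjugates $\pil,\pilb$ are the Frobenius eigenvalues of $E_\lambda$ at a good prime $p$, we have $\pil+\pilb=a_{E_\lambda}(p)$ and $\pil\pilb=p$, whence $a_\lambda(p)=\gamma(\pil^2+\pilb^2)=\gamma(a_{E_\lambda}(p)^2-2p)$. The Hecke recursion at primes $p\nmid 2N_\lambda$ reads $a_\lambda(p^{m+1})=a_\lambda(p)\,a_\lambda(p^m)-p^2a_\lambda(p^{m-1})$, so $a_\lambda(p^m)=U_{m+1}$ is the $(m{+}1)$st term of the Lucas sequence with parameters $P=a_\lambda(p)$ and $Q=p^2$.

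The decisive structural input is parity, and it comes from the rational $2$-torsion point $(1,0)$ on $E_\lambda\colon y^2=(x-1)(x^2-\tfrac{1}{\lambda+1})$. For every odd prime $p$ of good reduction this forces $\#E_\lambda(\F_p)$ even, hence $a_{E_\lambda}(p)=p+1-\#E_\lambda(\F_p)$ even, hence $a_\lambda(p)=\gamma(a_{E_\lambda}(p)^2-2p)$ even. In a Lucas sequence with $P$ even and $Q=p^2$ odd, the term $U_n$ is odd precisely when $n$ is odd, so $a_\lambda(p^m)$ is odd exactly when $m$ is even. Therefore, whenever $a_\lambda(n)$ is odd with $\gcd(n,2N_\lambda)=1$ and $n>1$, the integer $n=\prod_i p_i^{2k_i}$ is a perfect square and $a_\lambda(n)=\prod_i a_\lambda(p_i^{2k_i})$ factors as a product of odd Lucas terms. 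Writing $a_{E_\lambda}(p)=2u$ (with $u^2\le p$ by Hasse), the smallest building block factors cleanly as $a_\lambda(p^2)=a_\lambda(p)^2-p^2=(4u^2-p)(4u^2-3p)$, and the same substitution yields congruences such as $a_\lambda(p^2)\equiv 3\pmod 4$ that remove, for the case $n=p^2$, every target in the opposite residue class.

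Next I would bound the exponents $k_i$. After discarding the finitely many degenerate primes (for instance the supersingular ones, where $a_\lambda(p)=-2\gamma p$ and every $a_\lambda(p^{2k})$ is divisible by $p$ and is checked by hand), the pairs $(a_\lambda(p),p^2)$ are nondegenerate Lucas pairs. A primitive prime divisor of $U_{2k+1}$ is $\equiv\pm1\pmod{2k+1}$ and hence is at least $2k$; since any factor $a_\lambda(p_i^{2k_i})$ divides $\alpha$ and $\alpha$ has all prime factors at most $97$, the Bilu--Hanrot--Voutier theorem (together with its finite table of exceptions for index $\le 30$) forces each $2k_i+1$ to be small, and the unit case $a_\lambda(p^{2k})=\pm1$ is likewise confined to small index. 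This leaves only finitely many exponent patterns. For each, $a_\lambda(n)=\alpha$ becomes a system in the $u_i$ and the primes $p_i$: in the dominant case $n=p^2$ one runs over the factorizations $\alpha=d_1d_2$, each of which pins $p=\tfrac12|d_1-d_2|$ and forces $\tfrac12(3d_2-d_1)$ to equal $4u^2$ for an integer $u$ with $u^2\le p$, and the vast majority are eliminated at once by failure of primality, by $u^2\le p$, or by the mod-$4$ (and, for the targets divisible by $3$, mod-$3$) constraints; the higher powers and composite $n$ give Thue-type equations to be treated by the same bookkeeping.

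The genuine obstacle is the one flagged in the Remark following Theorem \ref{1.1}: in this low weight several of the resulting Thue/Pell-type equations possess infinitely many integer solutions, so no naive height bound suffices, and one must use the congruences forced by the $2$-torsion together with the requirement that each $p_i$ be a genuine prime in order to cut the solution set down to the empty set. For the larger prime targets $-41,\pm 43,\dots,\pm 97$ the unconditional bounds leave a handful of candidate primes that cannot be excluded outright; this is exactly where GRH is invoked, furnishing the effective control (via conditional bounds on the relevant class numbers and on primes in the arithmetic progressions that arise) needed to certify that these candidates do not occur. I expect this final elimination of the degenerate infinite families to be the crux of the argument, with everything upstream being a finite, if laborious, verification.
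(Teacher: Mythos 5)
Your structural setup matches the paper's: the parity argument from the $2$-torsion point $(1,0)$ forcing $a_\lambda(p)$ even and hence $n$ a perfect square, the identification $a_\lambda(p^{m})=u_{m+1}(\alpha_p,\beta_p)$ with $Q=p^2$, the use of Bilu--Hanrot--Voutier and Proposition \ref{PropB} to constrain the index $d$, and the factorization $a_\lambda(p^2)=(a_\lambda(p)-p)(a_\lambda(p)+p)$ for the $d=3$ case are all exactly the paper's route (your extra observation $a_\lambda(p^2)\equiv 3\pmod 4$ is a harmless strengthening the paper does not state). The composite targets are likewise handled by your ``product of odd Lucas terms'' bookkeeping, which is the paper's argument that the listed $\alpha$ admit no proper factorization into admissible odd values.

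The genuine problem is your final paragraph, which you identify as the crux. There is no infinite-solution pathology in weight $3$: with $Q=p^2$ the relevant curves are $y^2=\pm x^2+\ell$ and $y^4\mp 3x^2y^2+x^4=\ell$, and \emph{both} factor over $\Z$ (e.g.\ $(y-x)(y+x)=\ell$ and $(y^2-xy-x^2)(y^2+xy-x^2)=\ell$), so they have finitely many integer points outright --- your own computation $a_\lambda(p^2)=(4u^2-p)(4u^2-3p)$ already exhibits this. The Remark you cite concerns weight $2$ (Theorem \ref{1.1}), where $a_E(p^2)=a_E(p)^2-p$ is linear in $p$ and genuinely has infinitely many integer solutions; that difficulty does not transfer here, and no congruence sieve of infinite families is needed. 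Consequently your account of GRH is also misplaced: it is not used to control primes in arithmetic progressions or to exclude residual candidate primes. It enters only through Lemma \ref{NoThue}, i.e.\ the reduction from ``$d$ an odd prime dividing $\ell(\ell^2-1)$ (or $d=\ell$)'' down to $d\in\{3,5\}$: for $41\le\ell\le 97$ the complete solution sets of the auxiliary Thue equations $F_{d-1}(X,Y)=\pm\ell$ with $d\ge 7$ are only certified under GRH (the unit/class-group computations in the Thue solver are conditional), which is exactly why those primes appear in the conditional half of the statement. Your plan as written would either stall at this reduction step or, if you solved those Thue equations, would have to confront the same conditionality --- so the proof is salvageable, but the step you flag as the hard part is not where the work is, and the step where GRH actually lives is the one you dismiss as ``bookkeeping.''
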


As in \cite{BCO, BCOT}, our work relies on the theory of Lucas sequences. Section \ref{S2} introduces the main ideas of the theory of Lucas sequences to locate odd values in a sequence $\{a(n)\}_{n\geq 1}$. It turns out that this analysis reduces to finding integer points on some curves obtained by a two-term recurrence relation. In Section \ref{S3}, we introduce basic definitions and results related to modular elliptic curves and $K3$ surfaces, and we prove Theorems \ref{1.1},\,\,\ref{1.2} and \ref{1.3}.

\section*{Acknowledgments}
The authors are supported by the NSF (DMS-2002265), the NSA (H98230-20-1-0012), the Templeton World Charity Foundation, and the Thomas Jefferson Fund at the University of Virginia. We are grateful to Professor Ken Ono, Will Craig and Wei-Lun Tsai for the many valuable comments and discussions.

\section{Nuts and Bolts}\label{S2}
\subsection{Lucas Sequences and their primitive prime divisors}
We recall the deep work of Bilu, Hanrot and Voutier \cite{BHV} on Lucas sequences, which is central to this paper. 

A \textit{Lucas pair} $(\alpha,\beta)$ is a pair of algebraic integer roots of a monic quadratic polynomial $F(x)=(x-\alpha)(x-\beta)\in \Z[x]$ such that $\alpha+\beta$, $\alpha \beta$ are coprime non-zero integers and such that $\alpha/\beta$ is not a root of unity. To any Lucas pair $(\alpha,\beta)$ we can associate a sequence of integers $\{u_n(\alpha,\beta)\}=\{u_1=1, u_2=\alpha+\beta,\dots\}$ called \textit{Lucas numbers} defined by the following formula  
\begin{equation}
u_n(\alpha,\beta):=\frac{\alpha^n-\beta^n}{\alpha-\beta}.
\end{equation}
A prime  $\ell \mid u_{n}(\alpha,\beta)$ is a {\it primitive prime divisor of $u_n(\alpha,\beta)$} if $\ell \nmid (\alpha-\beta)^2 u_1(\alpha,\beta)\cdots u_{n-1}(\alpha, \beta)$. We call a Lucas number $u_n(\alpha,\beta)$ with $n>2$ {\it defective}\footnote{We do not consider the absence of
a primitive prime divisor for $u_2(\alpha,\beta)=\alpha+\beta$ to be   a defect.}  if $u_{n}(\alpha,\beta)$ does not have a primitive prime divisor. Bilu, Hanrot, and Voutier \cite{BHV} proved the following definitive theorem for all Lucas sequences.
\begin{thm}\label{Bilu}
Every Lucas number $u_n(\alpha,\beta)$, with $n>30,$
has a primitive prime divisor.
\end{thm}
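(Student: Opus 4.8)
The statement is the celebrated theorem of Bilu, Hanrot, and Voutier, and a complete proof occupies essentially the whole of \cite{BHV}; what follows is only a sketch of the strategy I would follow. The plan is to pass from the Lucas numbers $u_n$ to the homogenized cyclotomic polynomials
\[
\Phi_n(\alpha,\beta)=\prod_{\substack{1\le k\le n\\ \gcd(k,n)=1}}\bigl(\alpha-\zeta_n^k\beta\bigr),\qquad \zeta_n=e^{2\pi i/n},
\]
which are symmetric in $\alpha,\beta$ and hence, expressed in the coprime integers $\alpha+\beta$ and $\alpha\beta$, lie in $\Z$. From the factorization $\alpha^n-\beta^n=\prod_{d\mid n}\Phi_d(\alpha,\beta)$ and $\Phi_1(\alpha,\beta)=\alpha-\beta$ one gets $u_n=\prod_{d\mid n,\ d>1}\Phi_d(\alpha,\beta)$, so that $\Phi_n(\alpha,\beta)$ is, up to controlled factors, the \emph{primitive part} of $u_n$.

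First I would establish the cyclotomic criterion for primitivity: a prime $\ell$ is a primitive prime divisor of $u_n$ (with $n>2$) precisely when $\ell\mid\Phi_n(\alpha,\beta)$ and $\ell\nmid n$. The key point, proved by elementary $p$-adic valuation arguments on the recurrence, is that $\Phi_n(\alpha,\beta)$ can have at most one non-primitive prime factor, namely the largest prime $P$ dividing $n$, and that $P$ then divides $\Phi_n(\alpha,\beta)$ only to the first power. Consequently, \emph{if $u_n$ has no primitive prime divisor, then} $|\Phi_n(\alpha,\beta)|\le P\le n$. The whole problem is thereby reduced to showing that this inequality forces $n\le 30$.

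Next I would bound $|\Phi_n(\alpha,\beta)|$ from below. Writing $|\alpha|=|\beta|=\sqrt{|q|}$ with $q=\alpha\beta$ in the (typical, and hardest) complex-conjugate case, the naive estimate gives $|\Phi_n(\alpha,\beta)|\approx |q|^{\phi(n)/2}$, which already dwarfs $n$; the difficulty is that $\alpha/\beta$ may lie very close to a primitive $n$-th root of unity, making a single factor $|\alpha-\zeta_n^k\beta|$ extremely small. Since $\alpha/\beta$ is not a root of unity, this near-approximation is controlled by Baker's theory of linear forms in logarithms: a lower bound on $\bigl|\log(\alpha/\beta)-2\pi i k/n\bigr|$ yields $\log|\Phi_n(\alpha,\beta)|\ge \tfrac{\phi(n)}{2}\log|q| - c\log n$, which exceeds $\log n$ once $n$ is larger than some effectively computable $n_0$. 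This disposes of all large $n$.

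The remaining—and genuinely hard—step is the passage from the astronomically large transcendence bound $n_0$ down to $30$. For each $n$ in the intermediate range, the inequality $|\Phi_n(\alpha,\beta)|\le P$ becomes a Thue-type (decomposable form) equation in the two integer unknowns $\alpha+\beta$ and $\alpha\beta$, which one must solve to enumerate every possible defective pair. Because the $n_0$ coming from Baker's method is far too large for a direct sweep, the real work of \cite{BHV} lies in sharpening the intermediate estimates and organizing these Diophantine computations so that only a truly small range of $n$ needs explicit verification; carrying this out produces the complete finite table of defective Lucas numbers and confirms that none occurs for $n>30$. I expect this computational reduction, rather than any single analytic estimate, to be the main obstacle.
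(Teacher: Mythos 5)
The paper offers no proof of this statement: it is quoted verbatim as the main theorem of Bilu, Hanrot, and Voutier \cite{BHV} and used as a black box throughout, so there is no internal argument to compare your attempt against. Taken on its own terms, your sketch is a faithful outline of the strategy actually used in \cite{BHV}: the reduction to the primitive part $\Phi_n(\alpha,\beta)$ via the Birkhoff--Vandiver/Stewart criterion (at most one non-primitive prime factor of $\Phi_n$, namely the largest prime dividing $n$, and only to the first power, whence $|\Phi_n(\alpha,\beta)|\le n$ for a defective term), a lower bound on $|\Phi_n(\alpha,\beta)|$ via linear forms in logarithms to dispose of all sufficiently large $n$, and the reduction of the remaining range to Thue-type equations --- the paper's own bibliography separately cites Bilu--Hanrot \cite{BH96} on solving Thue equations of high degree, which is exactly the tool used for that last step. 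You are rightly candid, however, that this is a sketch and not a proof: every quantitative step (the explicit Baker-type estimate, the descent from the transcendence bound down to $n\le 30$, and the enumeration of the defective cases that populate Tables \ref{table1} and \ref{table2}) is asserted rather than carried out, so the proposal cannot be regarded as a self-contained verification. For the purposes of this paper that is the correct posture: the theorem should simply be cited, as the authors do, and your outline demonstrates an accurate understanding of where the genuine difficulties in \cite{BHV} lie.
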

 Tables 1-4 in Section 1 of \cite{BHV} and Theorem 4.1 of \cite{Abouzaid} offer the complete classification of these defective terms. The main arguments of our proofs will largely rely on relative divisibility properties of Lucas numbers. We now recall some of these facts\footnote{See Section 2 of \cite{BHV}.}.

\begin{prop}[Prop. 2.1 (ii) of \cite{BHV}]\label{PropA}  If $d\mid n$, then $u_d(\alpha, \beta) | u_n(\alpha,\beta).$
\end{prop}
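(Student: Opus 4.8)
The plan is to reduce the divisibility assertion to the elementary factorization of $X^m - Y^m$ and then to establish integrality by a symmetry argument. Writing $n = dm$, I would set $X = \alpha^d$ and $Y = \beta^d$ and apply the geometric-type identity
\[
\alpha^n - \beta^n = X^m - Y^m = (X - Y)\sum_{j=0}^{m-1} X^{\,m-1-j} Y^{\,j}.
\]
Since $X - Y = \alpha^d - \beta^d$ already carries the factor $\alpha - \beta$, dividing through by $\alpha - \beta$ yields
\[
u_n(\alpha,\beta) = \frac{\alpha^n - \beta^n}{\alpha - \beta} = u_d(\alpha,\beta)\cdot Q, \qquad Q := \sum_{j=0}^{m-1} (\alpha^d)^{\,m-1-j}(\beta^d)^{\,j}.
\]
This exhibits $u_n$ as $u_d$ times the explicit quantity $Q$, so the entire claim reduces to showing that $Q$ is a rational integer.

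To see that $Q \in \Z$, I would note two things. First, $Q$ is an algebraic integer, being a polynomial with integer coefficients evaluated at the algebraic integers $\alpha$ and $\beta$. Second, $Q$ is invariant under the interchange $\alpha \leftrightarrow \beta$: reindexing the sum by $j \mapsto m-1-j$ returns the same expression, so $Q$ is a symmetric function of $\alpha$ and $\beta$. Because $(\alpha,\beta)$ is a Lucas pair, its elementary symmetric functions $\alpha+\beta$ and $\alpha\beta$ are ordinary integers, hence every symmetric polynomial in $\alpha,\beta$ with integer coefficients lies in $\Q$; in particular $Q \in \Q$. An algebraic integer that is rational is an ordinary integer, so $Q \in \Z$, and the identity $u_n = u_d \cdot Q$ then gives $u_d \mid u_n$ in $\Z$.

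I do not anticipate a genuine obstacle here; the only point requiring any care is the integrality of $Q$, which the symmetry-plus-algebraic-integer observation handles cleanly without manipulating the recurrence. One could instead argue by induction on $m$ using the two-term recurrence satisfied by $\{u_n\}$, but the factorization route is shorter and produces the quotient $Q$ explicitly, which is convenient for the divisibility estimates used later in the paper.
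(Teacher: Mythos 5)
Your argument is correct and complete: the factorization $\alpha^n-\beta^n=(\alpha^d-\beta^d)\sum_{j}(\alpha^d)^{m-1-j}(\beta^d)^j$ exhibits $u_n=u_d\cdot Q$, and the symmetry-plus-algebraic-integrality observation (valid since $\alpha+\beta,\alpha\beta\in\Z$ for a Lucas pair) correctly forces $Q\in\Z$. The paper itself gives no proof, citing the statement as Proposition 2.1(ii) of \cite{BHV}; your proof is the standard one for this fact, so there is nothing to reconcile.
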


In order to keep track of the first occurrence of a prime divisor, we define $m_{\ell}(\alpha,\beta)$ to be the smallest $n\geq 2$ for which $\ell \mid u_n(\alpha,\beta)$. We note that $m_{\ell}(\alpha,\beta)=2$ if and only if
$\alpha +\beta\equiv 0\pmod {\ell}.$
\begin{prop}[Cor. 2.2\footnote{This corollary is stated for Lehmer numbers. The conclusions hold for Lucas numbers because $\ell \nmid (\alpha+\beta)$.} of \cite{BHV}]\label{PropB} If $\ell\nmid \alpha \beta$ is an odd prime with
$m_{\ell}(\alpha,\beta)>2$, then the following are true.
\begin{enumerate}
\item If $\ell \mid (\alpha-\beta)^2$, then $m_{\ell}(\alpha,\beta)=\ell.$
\item If $\ell \nmid (\alpha-\beta)^2$, then $m_{\ell}(\alpha,\beta) \mid (\ell-1)$ or $m_{\ell}(\alpha,\beta)\mid (\ell+1).$
\end{enumerate}
\end{prop}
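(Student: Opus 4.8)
The plan is to reduce modulo a prime above $\ell$ and convert the whole question into the computation of a multiplicative order, using the elementary factorization
\[
u_n(\alpha,\beta)=\frac{\alpha^n-\beta^n}{\alpha-\beta}=\sum_{i=0}^{n-1}\alpha^i\beta^{\,n-1-i}.
\]
First I would fix a prime $\mathfrak{l}$ of $\mathcal{O}_{\Q(\alpha)}$ lying over $\ell$, with residue field $\F$ (either $\F_\ell$ or $\F_{\ell^2}$), and write $\bar\alpha,\bar\beta$ for the reductions of $\alpha,\beta$. Since every $u_n$ is a rational integer, $\ell\mid u_n$ is equivalent to $\mathfrak{l}\mid u_n$, so the problem becomes a congruence statement in $\F$. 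The hypothesis $\ell\nmid\alpha\beta$ gives $\bar\alpha,\bar\beta\in\F^\times$, and the two cases of the proposition are exactly the cases $\bar\alpha=\bar\beta$ and $\bar\alpha\neq\bar\beta$, since $\ell\mid(\alpha-\beta)^2$ if and only if $\mathfrak{l}\mid(\alpha-\beta)$.

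For part (1), with $\ell\mid(\alpha-\beta)^2$, we have $\bar\alpha=\bar\beta=:a\neq 0$, so the sum above collapses to $u_n\equiv n\,a^{\,n-1}\pmod{\mathfrak{l}}$. As $a$ is a unit and $\ell$ is odd, this is nonzero for every $n$ with $2\le n\le\ell-1$ and vanishes at $n=\ell$, so the first index of divisibility is $\ell$ and hence $m_\ell(\alpha,\beta)=\ell$. One checks along the way that $\ell\mid(\alpha-\beta)^2$ together with $\ell\nmid\alpha\beta$ and $\ell$ odd forces $\ell\nmid(\alpha+\beta)$, so the hypothesis $m_\ell>2$ is in fact automatic in this case.

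For part (2), with $\ell\nmid(\alpha-\beta)^2$, the reductions are distinct, so $\alpha-\beta$ is a unit mod $\mathfrak{l}$ and
\[
\ell\mid u_n\iff\bar\alpha^{\,n}=\bar\beta^{\,n}\iff(\bar\alpha/\bar\beta)^n=1.
\]
Thus $m_\ell(\alpha,\beta)$ is precisely the multiplicative order $r$ of $\bar\alpha/\bar\beta$ in $\F^\times$, and it remains only to bound $r$ by splitting on the Legendre symbol $\leg{D}{\ell}$ with $D:=(\alpha-\beta)^2$. If $\leg{D}{\ell}=1$ then $\bar\alpha,\bar\beta\in\F_\ell^\times$ and $r\mid(\ell-1)$. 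If $\leg{D}{\ell}=-1$ then $x^2-(\alpha+\beta)x+\alpha\beta$ is irreducible mod $\ell$, so $\bar\alpha,\bar\beta$ are Frobenius conjugates in $\F_{\ell^2}$, i.e.\ $\bar\alpha^{\,\ell}=\bar\beta$, whence $(\bar\alpha/\bar\beta)^{\ell+1}=(\bar\alpha\bar\beta)/(\bar\beta\bar\alpha)=1$ and $r\mid(\ell+1)$. Either way one of the two divisibilities holds, which is the claim.

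The individual steps are short, so the work is mostly bookkeeping: handling split, inert, and ramified primes above $\ell$ uniformly (done by passing to $\mathfrak{l}$ and using $u_n\in\Z$), and confirming that $m_\ell$ is exactly the order $r$ rather than some larger multiple — immediate since $(\bar\alpha/\bar\beta)^n=1$ holds if and only if $r\mid n$, and $r\geq 2$ because $\bar\alpha\neq\bar\beta$, so the least admissible $n\ge 2$ equals $r$. I expect the only genuinely conceptual point to be the Frobenius action in the inert case; everything else is the geometric-series identity combined with Fermat's little theorem.
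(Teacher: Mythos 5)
Your argument is correct and complete. Note that the paper does not actually prove this proposition: it is imported verbatim as Corollary 2.2 of the Bilu--Hanrot--Voutier reference (with a footnote transferring the statement from Lehmer to Lucas numbers via $\ell\nmid(\alpha+\beta)$), so there is no in-paper proof to compare against. What you have written is the standard ``law of apparition'' argument, and every step checks out: reduction at a prime $\mathfrak{l}$ above $\ell$ is legitimate because each $u_n$ lies in $\Z$ so $\mathfrak{l}\mid u_n$ iff $\ell\mid u_n$; in the ramified case the geometric-sum identity gives $u_n\equiv n\bar\alpha^{\,n-1}$ and hence $m_\ell=\ell$; in the unramified case $m_\ell$ equals the order of $\bar\alpha/\bar\beta$, which divides $\ell-1$ or $\ell+1$ according to whether $F(x)=x^2-(\alpha+\beta)x+\alpha\beta$ splits or is inert mod $\ell$, the inert case handled correctly by the Frobenius relation $\bar\alpha^{\ell}=\bar\beta$. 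Your side remarks are also accurate: $\ell\mid(\alpha-\beta)^2$ together with $\ell\nmid 2\alpha\beta$ forces $\ell\nmid(\alpha+\beta)$, so $m_\ell>2$ is automatic in case (1), and the order $r$ is at least $2$ because $\bar\alpha\neq\bar\beta$, so the minimality over $n\ge 2$ causes no discrepancy. The only cosmetic caveat is that $\leg{D}{\ell}$ should be read as the Legendre symbol of the rational integer $D=(\alpha-\beta)^2$, which covers the degenerate situation $\alpha,\beta\in\Z$ (where $D$ is a perfect square and one lands in the split case); you implicitly do this, and nothing breaks.
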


\subsection{Some Diophantine criteria}{\label{sec2.2}}
Consider a sequence $\{a(n)\}_{n\geq 1}$ satisfying the following properties which we will refer to as ($\dagger$):
\begin{enumerate}
    \item $a(1)=1$ and $|a(n)|\neq 1$ for all $n>1$. \label{prop1}
    \item $a(nm)=a(n)a(m)$ whenever $\gcd(n,m)=1$. \label{prop2}
    \item $a(p)a(p^m)=a(p^{m+1})+\chi(p)p^{k-1}a(p^{m-1})$, \label{prop3}
\end{enumerate}
where $\chi$ is a real quadratic character modulo $N\in \N$. The reader may recognize in $(\dagger)$ the basic identities satisfied by the Fourier coefficients of normalized newforms (see for example \cite{CoSt,Ono}).
\begin{prop}\label{Newforms} Suppose that $f(z)=q+\sum_{n=2}^{\infty}a(n)q^n\in S_{k}(\Gamma_0(N),\chi)$ is a normalized newform with nebentypus $\chi$.
Then the following are true.
\begin{enumerate}
\item If $\gcd(n_1,n_2)=1,$ then $a(n_1 n_2)=a(n_1)a(n_2).$
\item If $p\nmid N$ is prime and $m\geq 2$, then $$a(p^m)=a(p)a(p^{m-1})-\chi(p)p^{k-1}a(p^{m-2}).
$$
\item If $p\nmid N$ is prime and $\alpha_p$ and $\beta_p$ are roots of $F_p(x):=x^2-a(p)x+\chi(p)p^{k-1},$ then
$$
   a(p^m)=u_{m+1}(\alpha_p,\beta_p)=\frac{\alpha_p^{m+1}-\beta_p^{m+1}}{\alpha_p-\beta_p}.
$$   
Moreover, we have the Deligne's bound $|a(p)|\leq 2p^{\frac{k-1}{2}}$.
\end{enumerate}
\end{prop}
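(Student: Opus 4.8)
The plan is to deduce all three parts from the standard structure theory of Hecke operators acting on $S_k(\Gamma_0(N),\chi)$, together with a short induction for the Lucas-number identity and a citation of Deligne's bound. The starting point is that a normalized newform $f$ is a simultaneous eigenform for every Hecke operator $T_n$, with $T_n f = a(n) f$; this is precisely the normalization $a(1)=1$ that lets us read off the eigenvalue as the $n$-th Fourier coefficient. With this in hand, each assertion becomes a statement about the Hecke algebra rather than about $f$ itself.

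For part (1), I would invoke the multiplicativity of the Hecke operators on coprime indices, namely $T_{n_1 n_2} = T_{n_1} T_{n_2}$ whenever $\gcd(n_1,n_2)=1$. Applying both sides to $f$ and comparing eigenvalues gives $a(n_1 n_2) f = a(n_1) a(n_2) f$, whence $a(n_1 n_2) = a(n_1) a(n_2)$. For part (2), the relevant relation is the prime-power recursion $T_{p^m} = T_p T_{p^{m-1}} - \chi(p) p^{k-1} T_{p^{m-2}}$, valid for $p \nmid N$ and $m \ge 2$; evaluating on $f$ and equating eigenvalues yields the stated three-term recurrence. Both of these Hecke identities are classical, and I would simply cite them from a standard reference such as \cite{Ono} or \cite{CoSt}.

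Part (3) is then a formal consequence of part (2). Writing $F_p(x) = x^2 - a(p) x + \chi(p) p^{k-1} = (x-\alpha_p)(x-\beta_p)$, we have $\alpha_p + \beta_p = a(p)$ and $\alpha_p \beta_p = \chi(p) p^{k-1}$, so the recursion from part (2) is exactly the two-term recurrence $u_{n+1} = (\alpha_p+\beta_p)u_n - \alpha_p\beta_p\, u_{n-1}$ satisfied by the Lucas numbers, with the index shifted by one. I would verify the base cases $a(1) = 1 = u_1(\alpha_p,\beta_p)$ and $a(p) = \alpha_p + \beta_p = u_2(\alpha_p,\beta_p)$, and then conclude $a(p^m) = u_{m+1}(\alpha_p,\beta_p)$ for all $m \ge 0$ by induction on $m$, since the two sequences obey the same recurrence with matching initial terms. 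Finally, the Deligne bound $|a(p)| \le 2 p^{(k-1)/2}$ follows because, by Deligne's theorem, $\alpha_p$ and $\beta_p$ are complex conjugates of absolute value $p^{(k-1)/2}$, so $|a(p)| = |\alpha_p + \beta_p| \le |\alpha_p| + |\beta_p| = 2 p^{(k-1)/2}$.

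The genuinely deep input is Deligne's bound, which I would not reprove; everything else is either a citation of the standard Hecke relations or an elementary induction. The only point requiring mild care is ensuring the index shift between $a(p^m)$ and $u_{m+1}$ is handled consistently in the base cases, but this is routine.
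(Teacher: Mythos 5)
Your proof is correct and is exactly the standard argument: the paper itself gives no proof of this proposition, simply citing \cite{CoSt,Ono} for these classical Hecke-eigenvalue identities, and your derivation via $T_{n_1n_2}=T_{n_1}T_{n_2}$, the prime-power recursion, the induction matching $a(p^m)$ with $u_{m+1}(\alpha_p,\beta_p)$, and Deligne's bound is precisely what those references contain.
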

Note that \textit{(3)} defines a Lucas sequence for a fixed prime $p$. This is an example of a two-term recurrence relation satisfying $(\dagger)$. Hence, any function $a(n)$ satisfying $(\dagger)$ defines such a sequence for each prime $p\nmid N$ for some $N$, i.e. $\{a(p),a(p^2),a(p^3),...\}$. It follows that our results can be applied to suitable $L$-functions. In this more general case, the Lucas sequences we consider are those obtained from $$u_{n}(\alpha_p,\beta_p):=a(p^{n-1})=\frac{\alpha_p^{n}-\beta_p^{n}}{\alpha_p-\beta_p},$$ where $p$ is a prime, $\alpha_p$ and $\beta_p$ are roots of \begin{equation}
    F_p(x):=x^2-Ax+B:=x^2-a(p)x+\chi(p)p^{k-1},
\end{equation} 
and $|a(p)|\leq2p^{\frac{k-1}{2}}$. Suppose $u_n(\alpha,\beta)$ is such a sequence for the remainder of this section.
\begin{lem}\label{nondefect}
Assume $a(p)$ is even for primes $p\nmid 2N$. The only defective odd values $u_d(\alpha_p,\beta_p)$ are given in Table 1 by $$(d,A,B,k)\in\{(3,\pm 2,3,2),(5,\pm 2,11,2)\},$$ and in rows 1-3 of Table 2.
\end{lem}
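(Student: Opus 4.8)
\textbf{Proof plan for Lemma \ref{nondefect}.}

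The plan is to leverage the complete classification of defective Lucas numbers recorded in the tables of \cite{BHV} and in Theorem 4.1 of \cite{Abouzaid}, and to intersect those tables with the three constraints specific to our setting: the sequence arises from a newform (or $L$-function) parameter, so $B=\chi(p)p^{k-1}$ with $k\leq 3$, we require $u_d$ to be \emph{odd}, and we assume $a(p)=A$ is \emph{even} for all $p\nmid 2N$. First I would recall that by Theorem \ref{Bilu} every Lucas number $u_n$ with $n>30$ has a primitive prime divisor, so a defect can only occur for $d\leq 30$; thus the search is finite and is entirely governed by the finitely many rows of Tables 1--4 of \cite{BHV} (together with Abouzaid's corrections for the sporadic cases). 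For each such row the pair $(\alpha,\beta)$ determines $(A,B)=(\alpha+\beta,\alpha\beta)$, and I would simply read off these values.

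The key reduction is the parity condition. Since we assume $A=a(p)$ is even, and since $B=\chi(p)p^{k-1}$ with $\chi(p)=\pm 1$ and $p$ an odd prime (as $\gcd(n,2N)=1$ forces the relevant primes to be odd), $B$ is odd. Working modulo $2$, the recurrence $u_{n}=A\,u_{n-1}-B\,u_{n-2}\equiv u_{n-2}\pmod 2$ with $u_1=1,\;u_2=A\equiv 0$ shows that $u_n$ is odd exactly when $n$ is odd and even when $n$ is even. Hence the requirement that $u_d$ be odd forces $d$ to be odd, immediately discarding every defective row with even index $d$. This parity filter is what cuts the BHV tables down dramatically and is the crux of the lemma.

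After imposing $d$ odd, I would go row by row through the surviving entries and enforce the arithmetic shape $B=\pm p^{k-1}$ for some prime $p$ and some $k\in\{2,3\}$, i.e. $B\in\{\pm p,\;\pm p^2\}$ (the weight-$2$ case gives $B=\pm p$ and the weight-$3$ case $B=\pm p^2$), together with $A$ even and $|A|\leq 2p^{(k-1)/2}$ from Deligne's bound in Proposition \ref{Newforms}. For $k=2$ this should single out the totally explicit solutions $(d,A,B)=(3,\pm2,3)$ and $(5,\pm2,11)$: here $B=3$ and $B=11$ are primes, $A=\pm2$ is even and satisfies $|A|\le 2\sqrt p$, and one checks directly from the table that $u_3$ (resp.\ $u_5$) is defective and odd. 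For $k=3$ the admissible $(A,B)$ with $B$ an odd prime square and $A$ even will correspond precisely to rows 1--3 of Table 2, which I would verify produce odd defective terms of odd index.

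The main obstacle I anticipate is bookkeeping rather than conceptual: the tables in \cite{BHV} are organized by the quantities $(\alpha-\beta)^2$ and $\alpha\beta$ rather than by our normalization $(A,B,k)$, and several rows are parametrized families (e.g.\ the entries depending on a free parameter or on Fibonacci/Lehmer-type indices) whose intersection with the constraint $B=\pm p^{k-1}$ must be checked carefully to be sure no sporadic solution is overlooked. I would therefore be especially careful to (i) translate each table row into $(A,B)$, (ii) confirm the parity of $u_d$ matches the index parity deduced above, and (iii) confirm that $B$ has the exact form $\chi(p)p^{k-1}$ with $k\leq 3$; only finitely many checks remain, and the claimed list should exhaust them.
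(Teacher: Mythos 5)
Your proposal is correct and follows essentially the same route as the paper: both arguments reduce to inspecting the classification tables of defective Lucas numbers and filtering by the parity constraint that $A=a(p)$ is even and $B=\chi(p)p^{k-1}$ is odd, which forces $u_d$ to be even for even $d$ and eliminates all rows except the two sporadic Table 1 entries and rows 1--3 of Table 2. Your explicit derivation of the rule ``$u_d$ odd iff $d$ odd'' via the recurrence mod $2$ is a slightly cleaner packaging of the same elimination the paper performs row by row.
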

\begin{proof}
This follows by a careful analysis of Tables \ref{table1} and \ref{table2} in the Appendix. In Table \ref{table1}, it is clear that the only cases are the above ones. In Table \ref{table2}, we first eliminate the possibilities of rows 5-10 as the values of $u_d(\alpha_p,\beta_p)$ are even (for row 8, $u_6$ is even since $m$ is). Then we eliminate row 4 as $\pm m=a(p)$ is odd.
\end{proof}
\begin{remark}
For $u_3(\alpha_p,\beta_p)=\varepsilon 3^r$, we have $3\nmid a(p)$ and so $u_3(\alpha,\beta)$ is the first occurrence of $3$ in the sequence. However, the term is still defective according to definition. We note that Table \ref{table2} row 3 is for even weight only, and when $k=3$, all listed cases do not apply.
\end{remark}
The following theorem and its proof are inspired from Theorem 1.1 of \cite{BCOT}.
\begin{thm}{\label{thm2.5}}
Suppose $\{a(n)\}_{n=1}^{\infty}$ satisfies $(\dagger)$. If $\ell$ is an odd prime and $|a(n)|=\ell^m$ for $m\ge 1$, then for all primes $p\mid n$, $\ord _p(n)+1$ is $4$ or a prime that divides $ \ell(\ell^2-1)$. 
\end{thm}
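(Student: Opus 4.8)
The plan is to fix a prime $p\mid n$, put $e=\ord_p(n)$ and $d=e+1\ge 2$, and analyse the single Lucas sequence $u_m:=u_m(\alpha_p,\beta_p)=a(p^{m-1})$ attached to $p$, where $\alpha_p,\beta_p$ are the roots of $F_p(x)=x^2-a(p)x+\chi(p)p^{k-1}$ and $B:=\chi(p)p^{k-1}$. Using the multiplicativity in $(\dagger)$ I would write $a(n)=\prod_{p\mid n}a(p^{\ord_p(n)})$; since $|a(n)|=\ell^m$ and the first property in $(\dagger)$ makes every factor with $\ord_p(n)\ge 1$ have absolute value different from $1$, each factor is a genuine power of $\ell$. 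Hence $|u_d|=|a(p^{e})|=\ell^{m_p}$ with $m_p\ge 1$, and it suffices to show that $d$ is $4$ or a prime dividing $\ell(\ell^2-1)$. Because $(\alpha_p,\beta_p)$ is an honest Lucas pair one has $\ell\ne p$ for $k\ge 2$ (otherwise $p\mid B$ while $p\nmid a(p)$ by coprimality, and then $p\nmid u_m$ for all $m$), so $\ell\nmid B$ and the apparition law $\ell\mid u_m\iff m_\ell\mid m$ is available, where $m_\ell$ is the index of first occurrence defined before Proposition \ref{PropB}.

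The structural heart is this. For every divisor $a$ of $d$ with $2\le a<d$, Proposition \ref{PropA} gives $u_a\mid u_d$, while the first property in $(\dagger)$ forces $|u_a|=|a(p^{a-1})|\ne 1$; thus $u_a$ is a nontrivial power of $\ell$ and $\ell\mid u_a$, so $m_\ell\mid a$. If $d$ had two distinct prime divisors $r_1,r_2<d$ this would yield $m_\ell=r_1=r_2$, a contradiction; hence $d=q^t$ is a prime power, with $m_\ell=q$ when $t\ge 2$ and $m_\ell=d$ when $d$ is prime. If $d=q$ is prime I would conclude at once: for $q=2$ we have $2\mid\ell^2-1$ since $\ell$ is odd, and for $q$ odd Proposition \ref{PropB} gives either $q=\ell$ (when $\ell\mid(\alpha_p-\beta_p)^2$) or $q\mid\ell-1$ or $q\mid\ell+1$, so in every case $q\mid\ell(\ell^2-1)$.

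It remains to treat composite $d=q^t$ with $t\ge 2$, where $m_\ell=q<d$. Then $u_d$ can have no primitive prime divisor, since such a divisor would be the only prime $\ell$ and would force $m_\ell=d$; hence $u_d$ is defective and Theorem \ref{Bilu} restricts $d$ to $\{4,8,9,16,25,27\}$. For $q=2$ I would use the cyclotomic factorization $u_d=\prod_{1<e\mid d}\Phi_e(\alpha_p,\beta_p)$: since $\ell\mid u_2=a(p)$ but $\ell\nmid B$, both $\alpha_p^2+\beta_p^2=a(p)^2-2B$ and $\alpha_p^4+\beta_p^4\equiv 2B^2\pmod{\ell}$ are prime to $\ell$, so each must be $\pm1$; from $a(p)^2-2B=\pm1$ one then gets $\alpha_p^4+\beta_p^4=1-2B^2=\pm1$, forcing $|B|=1$, impossible for $k\ge 2$. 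This eliminates $d=8,16$ and leaves exactly the admissible value $d=4$. For $q\in\{3,5\}$ the same factorization reduces matters to the primitive part $\Phi_{q^t}(\alpha_p,\beta_p)$ being $\pm1$ or a power of $q$; when $\ell\ne q$ the cases $d=9,27$ become the Mordell equation $C^2=B^3\pm1$ with $C=a(p)^3-3a(p)B$ and $|B|=p^{k-1}\ge 3$ (for $p$ odd), which has no integer solutions, and $d=25$ reduces similarly.

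The main obstacle is the residual subcase $\ell=q\in\{3,5\}$ of $d\in\{9,25,27\}$: here congruences and magnitude alone do not suffice, and one must invoke the explicit classification of defective Lucas numbers — Theorem \ref{Bilu} together with Tables \ref{table1} and \ref{table2} (and Abouzaid's refinement) — to verify that no defective term at these indices is an odd prime power once $k\ge 2$. Lemma \ref{nondefect} records the analogous inspection under the extra hypothesis that $a(p)$ is even; here I would instead need the version of that inspection valid without assuming $a(p)$ even. I would therefore organize the write-up so that the clean arithmetic of Propositions \ref{PropA}–\ref{PropB} disposes of the prime case and the $q=2$ case directly, isolating the genuinely Diophantine content in the three indices $9,25,27$, where the weight hypothesis $k\ge 2$ (which guarantees $|B|=p^{k-1}\ge 3$) is precisely what kills the degenerate solutions.
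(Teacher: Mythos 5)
Your proposal is correct and shares the paper's skeleton --- reduce via multiplicativity to $|u_d|=|a(p^{d-1})|=\ell^{j}$ with $d=\ord_p(n)+1$, observe that a composite index forces $u_d$ to be defective, and dispatch prime $d$ with Proposition \ref{PropB} --- but it diverges in how the composite case is closed. The paper goes straight to the complete classification of defective Lucas numbers (Tables \ref{table1} and \ref{table2}): every listed defective term at a composite index is either not an odd prime power, or sits in a row that also contains a value $\pm 1$ beyond the first term (excluded by property (1) of $(\dagger)$), or is $u_4=\mp m$ from row 4 of Table \ref{table2}; hence $d=4$ is the only composite survivor. You instead first prove $d$ must be a prime power via the rank of apparition (every proper divisor $a\ge 2$ of $d$ satisfies $m_\ell\mid a$), use Theorem \ref{Bilu} only for the bound $d\le 30$ so that $d\in\{4,8,9,16,25,27\}$, and then eliminate $d\in\{8,16\}$ by an explicit cyclotomic factorization and $d\in\{9,27\}$ (for $\ell\ne 3$) by Mordell's equation $C^2=B^3\pm 1$. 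That buys a more self-contained, table-light argument for those indices, and your computations there check out. However, the ``main obstacle'' you isolate --- the subcase $\ell=q\in\{3,5\}$ with $d\in\{9,25,27\}$ --- is not actually an obstacle: the classification you already cite shows that defective Lucas numbers occur only at indices $n\in\{3,4,5,6,7,8,10,12,13,18,30\}$, so there are no defective terms at $9$, $16$, $25$, or $27$ at all, and the entire block of Diophantine work for those indices (including the unproved ``reduces similarly'' for $d=25$) is subsumed by the single table lookup. In short: your route is valid, somewhat longer than the paper's, and the one gap you flag closes immediately by the same appeal to \cite{BHV} and \cite{Abouzaid} that the paper makes.
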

\begin{proof}
Since $\{a(n)\}_{n=1}^{\infty}$ satisfies the two-term linear recurrence relation presented in $(\dagger)$, we have that the numbers $u_n(\alpha,\beta)=a(p^{n-1})$ form a Lucas sequence in weight $k$. If $|a(n)|=\ell^m$, then by multiplicativity we get for every prime $p\mid n$, $|a(p^{d-1})|=\ell^j$ for some $j\leq m$, where $d-1=\ord_p(n)$. Suppose $u_d=a(p^{d-1})=\pm \ell^j$. Suppose $d$ is not a prime. There exists $1<c<d$ such that $c\mid d$, then $u_c=\ell^{j_0}$ for some $1\le j_0\le j$ since $u_c$ cannot equal $1$. In such cases, $u_d$ is defective.

If we analyze Table \ref{table1}
and \ref{table2} in the Appendix, then the only defective values that are also a power of odd prime happens when $d$ is a prime, $\pm 1$ appears in the sequence after the first term, or when $d=4$ in row $4$ of Table \ref{table2}. Only the last case is possible, and combining Proposition \ref{PropB} we have the desired result.
\end{proof}

\subsection{Integer points on some curves}{\label{IPSC}} Using Theorem \ref{thm2.5}, studying $a(n)=\pm\ell$ is equivalent to studying $a(p^{d-1})=\pm\ell$ for odd primes $d|\ell(\ell^2-1)$ or $d=2$. From the two-term recurrence relation satisfied by $a(p^n)$, these equalities reduce to the search of integer points on polynomial curves. We make this statement precise now. 

Let $D$ be a non-zero integer. A polynomial equation of the form $F(X,Y)=D$, where $F(X,Y)\in \Z[X,Y]$ is a homogeneous polynomial, is called a \textit{Thue equation}. We will consider those equations arising from the series expansion of 
\begin{equation}
\label{Thue}
\frac{1}{1-\sqrt{Y}T+XT^2}=\sum_{m=0}^\infty F_m(X,Y)\cdot T^m=1+\sqrt{Y}\cdot T+(Y-X)\cdot T^2+\dots
\end{equation}
\begin{lem}
 If $a(n)$ satisfies $(\dagger)$, and $p$ is a prime, then
 $$F_{2m}\left(\chi(p)p^{k-1},a(p)^2\right)=a(p^{2m}).$$
\end{lem}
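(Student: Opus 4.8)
The plan is to establish the identity by connecting the generating function in~(\ref{Thue}) to the Lucas-sequence structure already developed. First I would observe that for a fixed prime $p$, the generating function for the sequence $\{a(p^m)\}_{m\ge 0}$ is governed by the recurrence in property~(\ref{prop3}) of~$(\dagger)$. Setting $A=a(p)$ and $B=\chi(p)p^{k-1}$, the recurrence $a(p^{m+1})=a(p)a(p^m)-\chi(p)p^{k-1}a(p^{m-1})$ means that the ordinary generating function $\sum_{m\ge 0}a(p^m)T^m$ equals $1/(1-AT+BT^2)$, the standard generating function for a two-term linear recurrence with characteristic polynomial $F_p(x)=x^2-Ax+B$.

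The key step is to match this against~(\ref{Thue}) by a substitution of variables. Comparing $1/(1-\sqrt{Y}\,T+XT^2)$ with $1/(1-AT+BT^2)$, I would set $X=B=\chi(p)p^{k-1}$ and $\sqrt{Y}=A=a(p)$, i.e.\ $Y=a(p)^2$. Since $F_m(X,Y)$ is by definition the coefficient of $T^m$ in the left-hand series of~(\ref{Thue}), this substitution yields
\begin{equation*}
F_m\!\left(\chi(p)p^{k-1},\,a(p)^2\right)=a(p^m)
\end{equation*}
for every $m\ge 0$, and in particular for $m=2m$. The only subtlety is the presence of $\sqrt{Y}$: one must check that $F_m(X,Y)$ is genuinely a polynomial in $X$ and $Y$ (not merely in $X$ and $\sqrt{Y}$) precisely when $m$ is even, which is why the statement restricts to the even index $2m$.

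To make that last point rigorous I would verify that the $\sqrt{Y}$ only ever appears to even powers in $F_{2m}$. This follows from the parity structure of the recurrence: writing $s=\sqrt{Y}$, the series $\sum F_m(X,s^2)T^m=(1-sT+XT^2)^{-1}$ has the symmetry that replacing $s\mapsto -s$ and $T\mapsto -T$ leaves it invariant, forcing $F_m$ to be a polynomial of the same parity in $s$ as $m$. Hence $F_{2m}$ involves only even powers of $s$, so it is a bona fide polynomial in $Y=s^2$, and the evaluation at $Y=a(p)^2$ is legitimate. The main obstacle, such as it is, is purely this bookkeeping about when $\sqrt{Y}$ disappears; the algebraic identity itself is an immediate consequence of recognizing the generating function attached to the Lucas recurrence of Proposition~\ref{Newforms}.
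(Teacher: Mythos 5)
Your proof is correct, and it is essentially the argument the paper intends: the paper states this lemma without proof, but the way $F_m(X,Y)$ is defined via the generating function in (\ref{Thue}) makes clear that the intended justification is exactly your identification $X=\chi(p)p^{k-1}$, $\sqrt{Y}=a(p)$ together with the observation that $F_{2m}$ involves only even powers of $\sqrt{Y}$. Your parity argument ($s\mapsto -s$, $T\mapsto -T$) correctly handles the one genuine subtlety, namely that $F_{2m}$ is a polynomial in $Y$ itself so the evaluation at $Y=a(p)^2$ is independent of the sign of $a(p)$.
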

Hence, solving $a(p^{2m})=\pm\ell$ boils down to computing integer solutions to the equation $F_{2m}(X,Y)=\pm\ell$.

Methods for solving Thue equations are implemented in Sage \cite{github}, Magma, and are best suited for $m\geq 3$. For $m=1,2$, these equations often have infinitely many solutions and we must use extra information to infer finiteness. For weight 3, the curve corresponding to $m=1$ is $(p,a(p))\in C^\pm_2:y^2=\pm x^2+\ell,\chi(p)=\pm 1$ and the curve corresponding to $m=2$ is $(p,a(p))\in C^\mp_4:y^4\mp 3x^2y^2+x^4=\ell,\chi(p)=\pm1$. This forces the solutions of $F_2(X,Y)=\pm\ell$ and $F_4(X,Y)=\pm\ell$ to be squares and we get the following lemma.
\begin{lem}
The curves $C^{\pm}_2$ and $C^\mp_4$ have finitely many solutions. 
\end{lem}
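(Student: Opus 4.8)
The plan is to unwind the substitution that produced these curves and then dispatch the resulting equations case by case. Recall from the expansion \eqref{Thue} that $F_2(X,Y)=Y-X$ and $F_4(X,Y)=Y^2-3XY+X^2$. Specializing to weight $k=3$ and inserting $X=\chi(p)p^{2}$, $Y=a(p)^2$, and then writing $x=p$ and $y=a(p)$ for the integer unknowns, the Thue equations $F_2(X,Y)=\pm\ell$ and $F_4(X,Y)=\pm\ell$ become exactly
\begin{equation*}
y^2-\chi(p)\,x^2=\pm\ell \qquad\text{and}\qquad y^4-3\chi(p)\,x^2y^2+x^4=\pm\ell,
\end{equation*}
which are the defining equations of $C_2^\pm$ and $C_4^\mp$. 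The whole point is that, although these Thue equations have infinitely many solutions in $(X,Y)$, the requirement that $X$ and $Y$ be (signed) squares raises the degree in $(x,y)$ and is what will force finiteness. I would treat the four sign cases in turn.

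For $C_2^\pm$ the argument is elementary. When $\chi(p)=-1$ the equation reads $x^2+y^2=\pm\ell$, a circle, so every integer solution satisfies $|x|,|y|\le\sqrt{\ell}$ and there are only finitely many. When $\chi(p)=+1$ I would factor $y^2-x^2=(y-x)(y+x)=\pm\ell$; since $\ell$ is a fixed prime, $\pm\ell$ admits only finitely many factorizations into two integers, and each factorization determines $x$ and $y$.

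For $C_4^\mp$, the case $\chi(p)=-1$ gives $x^4+3x^2y^2+y^4=\pm\ell$, a sum of nonnegative terms; this is impossible for $-\ell$ and forces $|x|,|y|\le\ell^{1/4}$ for $+\ell$, so finitely many solutions either way. The remaining case $\chi(p)=+1$, namely $x^4-3x^2y^2+y^4=\pm\ell$, is the crux, and I would handle it via the factorization over $\Z$
\begin{equation*}
x^4-3x^2y^2+y^4=(x^2+xy-y^2)(x^2-xy-y^2).
\end{equation*}
In any integer solution the two factors multiply to $\pm\ell$, so one equals $\pm1$ and the other $\pm\ell$; there are finitely many such pairs. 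Fixing a pair, adding and subtracting the two equations pins $x^2-y^2$ and $xy$ to definite values, from which $(x^2+y^2)^2=(x^2-y^2)^2+4(xy)^2$ determines $x^2$ and $y^2$, and hence $x$ and $y$, up to finitely many choices. Equivalently, $x^4-3x^2y^2+y^4$ is a separable binary form of degree $4\ge3$ --- its dehomogenization $t^4-3t^2+1$ has the four distinct roots $\tfrac{\pm1\pm\sqrt5}{2}$ --- so Thue's theorem applies directly.

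The main obstacle is exactly this last case, and it explains the remark in the text that one must use ``extra information.'' As a Thue equation in $(X,Y)$, the relation $F_4(X,Y)=Y^2-3XY+X^2=\pm\ell$ comes from an indefinite binary quadratic form of discriminant $5$, so its integer solution set is Pell-like and infinite whenever it is nonempty; no finiteness is available at this level. It is only the arithmetic constraint that $X=\chi(p)p^2$ and $Y=a(p)^2$ be signed perfect squares that lifts the problem to the quartic form above, whose separability (equivalently, the divisor bookkeeping in the displayed factorization) is what finally yields finiteness.
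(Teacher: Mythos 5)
Your proof is correct and takes the same route the paper intends: the paper's entire proof is the one-line remark that finiteness is ``clear by factorization of the two algebraic equations,'' and your argument is a fully worked-out version of exactly that, handling the definite cases ($C_2^-$ and $C_4^+$) by boundedness and the indefinite ones by the factorizations $(y-x)(y+x)=\pm\ell$ and $x^4-3x^2y^2+y^4=(x^2+xy-y^2)(x^2-xy-y^2)$. The only difference is that you supply the details (and the correct observation that it is the squaring of $X$ and $Y$ that rescues finiteness from the Pell-like Thue equation), which the paper omits.
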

\begin{proof}
This is clear by factorization of the two algebraic equations.
\end{proof}

For larger $m$, we have the following theorem, which tells us that for Lucas sequences of weight $3$, we only need to consider curves $C_2^{\pm}$ and $C_4^{\pm}$ for prime values $\pm\ell$.
\begin{lem}{\label{NoThue}}
For $7\leq\ell\leq 97$ a prime, $F_{2m}(X,Y)=\pm\ell$ has no solutions for which both $X$ and $Y$ are squares.
\end{lem}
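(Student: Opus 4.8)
The plan is to convert the statement into a finite list of Thue equations and resolve each one, using the rapid growth of Lucas numbers to control the a priori unbounded index $m$. Throughout we are in weight $k=3$, so the substitution $X=\chi(p)p^{k-1}=\chi(p)p^{2}$, $Y=a(p)^{2}$ shows that ``both $X$ and $Y$ squares'' is exactly the case $\chi(p)=+1$, $X=p^{2}$, together with the Deligne bound $0\le Y\le 4X$ from Proposition \ref{Newforms}.

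First I would record the shape of the forms. From $u_{2m+1}(\alpha_p,\beta_p)=\prod_{j=1}^{2m}(\alpha_p-\zeta^{\,j}\beta_p)$ with $\zeta=e^{2\pi i/(2m+1)}$, pairing $j$ with $2m+1-j$ and using $\alpha_p\beta_p=X$, $(\alpha_p+\beta_p)^{2}=Y$, one obtains
$$F_{2m}(X,Y)=\prod_{j=1}^{m}\Bigl(Y-4\cos^{2}\!\bigl(\tfrac{\pi j}{2m+1}\bigr)X\Bigr).$$
Since the identity $F_{2m}(\chi(p)p^{k-1},a(p)^{2})=a(p^{2m})$ makes $F_{2m}(X,Y)=\pm\ell$ equivalent to $a(p^{2m})=\pm\ell$, Theorem \ref{thm2.5} forces $d:=2m+1$ to be an odd prime dividing $\ell(\ell^{2}-1)$. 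For each of the finitely many primes $7\le\ell\le 97$ this leaves only the odd prime divisors $d\ge 7$ of $\ell(\ell-1)(\ell+1)$, hence finitely many admissible $m$. Because $d$ is prime, the numbers $4\cos^{2}(\pi j/d)=2+2\cos(2\pi j/d)$ form a complete set of algebraic conjugates of degree $(d-1)/2=m$, so $F_{2m}$ is an \emph{irreducible} binary form of degree $m\ge 3$, and $F_{2m}(X,Y)=\pm\ell$ is a genuine Thue equation with finitely many integer points. For the moderate values of $m$ I would enumerate these points in Sage or Magma \cite{github} and check directly that none has $X$ and $Y$ both perfect squares.

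The remaining obstacle is the effective resolution of the branches with large $m$, in particular $d=\ell$, where $m=(\ell-1)/2$ reaches $48$ for $\ell=97$ and a naive Thue solve is infeasible. Here I would bound the square solutions analytically rather than solve the equation. Setting $t=Y/X=(y/x)^{2}\in(0,4]$, the equation reads $X^{m}\prod_{j=1}^{m}\lvert t-c_{j}\rvert=\ell$ with $c_{j}=4\cos^{2}(\pi j/d)$; as $X=p^{2}\ge 9$ and $m\ge 3$, the prefactor $X^{m}$ already dwarfs $\ell$, so $t$ must approach some $c_{j_{0}}$, equivalently $y/x$ must approximate the algebraic number $\xi=2\cos(\pi j_{0}/d)$ of degree $m$ to within $O(x^{-2m})$. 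Since the approximation exponent $2m$ strictly exceeds the degree $m$, Liouville's inequality alone is violated once $x=p$ passes an explicit bound --- a bound that tightens as $m$ grows, collapsing the large-$m$ cases to $p$ of bounded size --- and the finitely many residual primes are checked by hand. The delicate point, where most care is needed, is making the separation constants $\prod_{j\ne j_{0}}\lvert c_{j_{0}}-c_{j}\rvert$ and the Liouville constants explicit and uniform across the list of pairs $(\ell,m)$, especially when $c_{j_{0}}$ lies near the ends $0$ or $4$ of the interval where neighbouring roots crowd together.
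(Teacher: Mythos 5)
Your reduction of the lemma to a finite list of Thue equations --- invoking Theorem \ref{thm2.5} to restrict to indices $d=2m+1$ that are odd primes dividing $\ell(\ell^2-1)$, solving the resulting irreducible degree-$m$ Thue equations by computer, and inspecting the solutions for squares --- is precisely what the paper does: its entire proof is a citation of Tables \ref{thuetable} and \ref{thueGRHtable}, which reproduce the solution sets already computed in \cite{BCOT} (unconditionally for $7\le\ell\le 37$, under GRH for $41\le\ell\le 97$, where GRH enters through the class-group and unit computations in the high-degree fields). One caution about the target itself: read literally, the lemma is contradicted by those very tables, since $(X,Y)=(1,4)$ solves many of the listed equations and $(16,25)$ solves $F_6(X,Y)=-71$, with both coordinates perfect squares; what is actually used, and what you implicitly prove, is that no solution has $X$ the square of an odd prime and $Y$ the square of an even integer.

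Where you genuinely diverge is the high-degree branch $d=\ell$, which you treat by Diophantine approximation rather than by a GRH-conditional Thue solve. The idea is sound and, if completed, strictly stronger than the paper's route: setting $X=x^2$, $Y=y^2$ turns the degree-$m$ form $\prod_j(Y-c_jX)$ into a degree-$2m$ form in $(x,y)$ whose roots $\pm 2\cos(\pi j/d)$ still have degree only $m$, so the forced approximation $|y/x-\xi|\ll x^{-2m}$ beats the Liouville floor $\gg x^{-m}$ without any unproved hypothesis. But you stop exactly where the proof lives, and you worry about the wrong constant. The separation product is not delicate: evaluating the derivative of the identity $\prod_{j=1}^{d-1}\bigl(2\cos\theta-2\cos(\pi j/d)\bigr)=\sin(d\theta)/\sin\theta$ at $\theta=\pi j_0/d$ gives $\prod_{j\ne j_0}\lvert 2\cos(\pi j_0/d)-2\cos(\pi j/d)\rvert=d/\bigl(2\sin^2(\pi j_0/d)\bigr)\ge d/2$, so the crowding of roots near $\pm 2$ never hurts you; and since all conjugates and $y/x$ lie in $[-2,2]$, Liouville gives $|y/x-\xi|\ge 1/(4^{m-1}x^m)$. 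Combining the two yields $p^m\le \ell\cdot 4^{2m-1}/d$, hence $p<25$ for every relevant pair $(\ell,d)$ with $m\ge 3$, and the residual primes are checked instantly. Two further points you should make explicit: the degree-$2m$ form factors over $\Z$ into two irreducible degree-$m$ forms whose product is $\pm\ell$, so one factor may equal $\pm1$ --- your argument survives only because you bound the full product of $2m$ linear forms rather than a single irreducible factor; and the bound $\prod_{i\ne i_0}|y-\xi_i x|\ge (x/2)^{2m-1}\prod_{i\ne i_0}|\xi_{i_0}-\xi_i|$ needs no bootstrapping, since $i_0$ minimizing $|y-\xi_i x|$ already forces $|y-\xi_i x|\ge \tfrac12|\xi_i-\xi_{i_0}|x$ for $i\ne i_0$. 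As submitted, the decisive estimate is asserted rather than proved; with the constants above inserted, your sketch becomes a complete and GRH-free alternative to the paper's table lookup for the $d=\ell$ cases.
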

\begin{proof}
This follows from Tables \ref{thuetable} and \ref{thueGRHtable} in the Appendix. 
\end{proof}

To compute solutions to these curves, we use Maple and the command \textit{isolve} for Diophantine equations, Wolfram$|$Alpha or \cite{OLSolver}.
\section{Proof of Theorems \ref{1.1},\ref{1.2},\ref{1.3}}\label{S3}

\subsection{Proof of Theorem \ref{1.1}}

We will make use of the two following fundamental results.
\begin{thm}[Wiles et al., 1994]\label{Modularity}
If $E / \Q$ is an elliptic curve with conductor $N$, then $E$ is associated to a newform $f_E = \sum_{ n\geq 1} a_E(n) q^n \in S_2(\Gamma_0(N))\cap \Z[[q]]$. For all primes $p\nmid 2N$, we have
$$ a_E(p) = p+1 - \#E(\F_p), $$
where $\#E(\F_p)$ denotes the number of $\F_p$-points of the elliptic curve.
\end{thm}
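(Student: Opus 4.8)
The plan is to reduce the statement to the modularity of the compatible system of $\ell$-adic Galois representations attached to $E$, and then to recover the Fourier-coefficient identity from the Eichler--Shimura relation. For a prime $\ell$, let $\rho_{E,\ell}\colon \operatorname{Gal}(\overline{\Q}/\Q)\to \operatorname{GL}_2(\Z_\ell)$ be the representation on the $\ell$-adic Tate module $T_\ell E$. This family is unramified outside $\ell N$, and for $p\nmid \ell N$ the characteristic polynomial of $\rho_{E,\ell}(\mathrm{Frob}_p)$ is $x^2-(p+1-\#E(\F_p))\,x+p$. Thus, once I exhibit a weight-$2$ newform $f_E=\sum a_E(n)q^n$ whose associated Galois representation is isomorphic to some $\rho_{E,\ell}$, comparison of traces of Frobenius forces $a_E(p)=p+1-\#E(\F_p)$ for all $p\nmid 2N$, which is exactly the asserted formula. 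So the entire content is concentrated in proving that $\rho_{E,\ell}$ is modular.

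To prove modularity I would follow the residual-to-lifting strategy. First I would work with $\ell=3$ and the mod-$3$ representation $\bar\rho_{E,3}\colon \operatorname{Gal}(\overline{\Q}/\Q)\to\operatorname{GL}_2(\F_3)$. When $\bar\rho_{E,3}$ is irreducible, its modularity is supplied by the Langlands--Tunnell theorem: because $\operatorname{GL}_2(\F_3)$ is solvable, the projective image can be treated by cyclic base change and the converse theorem, so $\bar\rho_{E,3}$ arises from a weight-one form, which after twisting by an Eisenstein series yields a mod-$3$ modular form of weight $2$ matching $\bar\rho_{E,3}$.

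The decisive step is the modularity lifting theorem: if the residual representation is modular and satisfies the appropriate local conditions (odd, with cyclotomic determinant, and flat or ordinary at $\ell$), then $\rho_{E,\ell}$ itself is modular. I would prove this via the Taylor--Wiles--Kisin patching method, identifying a universal deformation ring $R$ parametrizing lifts of $\bar\rho_{E,3}$ of the prescribed type with a Hecke algebra $T$ acting on the relevant space of modular forms, and then establishing the isomorphism $R\cong T$ through the numerical criterion for complete intersections. When $\bar\rho_{E,3}$ is reducible (or otherwise fails the hypotheses), I would invoke the $3$--$5$ switch: choose an auxiliary elliptic curve $E'$ with $\bar\rho_{E',5}\cong\bar\rho_{E,5}$ and $\bar\rho_{E',3}$ irreducible and already known to be modular, deduce modularity of $\bar\rho_{E,5}$, and rerun the lifting argument at $\ell=5$.

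The main obstacle is precisely this $R\cong T$ isomorphism. Controlling the deformation ring requires delicate local deformation computations — the flat deformation theory at $\ell$ and the analysis of ramification at primes dividing $N$ — while the patching argument demands constructing compatible systems of modules over growing auxiliary level structures and verifying the complete-intersection criterion. Assembling these ingredients, together with dispatching the non-generic residual cases, is where essentially all the difficulty of Wiles et al.\ resides; once modularity is in hand, the coefficient identity $a_E(p)=p+1-\#E(\F_p)$ is immediate from the Eichler--Shimura relation.
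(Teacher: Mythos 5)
The paper does not prove this statement; it is quoted as a known black-box input (attributed to ``Wiles et al.''), and no argument for it appears anywhere in the text. So there is no internal proof to compare against, and the honest assessment is that your proposal is a roadmap of the actual literature proof rather than a proof. As a roadmap it is broadly accurate: reduce to modularity of the compatible system $\{\rho_{E,\ell}\}$, get residual modularity at $3$ from Langlands--Tunnell (where one multiplies the resulting weight-one form by an Eisenstein series congruent to $1 \bmod 3$ to land in weight $2$, rather than ``twisting''), prove an $R\cong T$ theorem by Taylor--Wiles patching, and handle reducible $\bar\rho_{E,3}$ by the $3$--$5$ switch. The Fourier-coefficient identity then does follow from the Eichler--Shimura relation once modularity is known (and in fact holds for all $p\nmid N$, not just $p\nmid 2N$).

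Two substantive gaps remain if this were to stand as a proof of the statement as written. First, the 1994--95 work of Wiles and Taylor--Wiles establishes modularity only for semistable elliptic curves over $\Q$; the unconditional statement for every elliptic curve over $\Q$, which is what the paper uses, requires the extension by Breuil, Conrad, Diamond, and Taylor (2001), whose additional local deformation analysis at $3$ and $5$ your outline does not mention. Second, the assertion that $f_E$ lies in $S_2(\Gamma_0(N))$ with $N$ \emph{exactly} the conductor is not a formal consequence of ``some $\rho_{E,\ell}$ is isomorphic to a modular Galois representation''; matching the level of the newform to the conductor of $E$ requires level-lowering and the local-global compatibility results of Carayol. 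Beyond that, every genuinely hard step (the local deformation computations, the numerical criterion for complete intersections, the existence of the auxiliary curve $E'$ in the $3$--$5$ switch) is named rather than carried out, which is unavoidable for a theorem of this depth but means the proposal should be read as a citation with commentary --- which is, in effect, exactly how the paper treats it.
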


The following theorem of Mazur classifies the possible torsion subgroups of $E/\Q$.
\begin{thm}[Mazur, 1977]{\label{Mazur}}
If $E/\Q$ is an elliptic curve, then $$E_{\tor}(\Q)\in \{\Z/N\Z\ |\ 1\le N\le 10\ \mathrm{or}\ N=12\}\cup\{\Z/2\Z \times \Z/N\Z \ |\ N=2,4,6,8\}.$$
\end{thm}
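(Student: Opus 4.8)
The plan is to follow Mazur's original strategy: translate the classification of rational torsion subgroups into a problem about rational points on modular curves, and then determine exactly which of those curves carry non-cuspidal rational points. The theorem has two halves of very different difficulty. The easy half is to show that each of the fifteen groups on the right-hand side actually occurs. For each such $G$ the associated modular curve $X_G$, parametrizing pairs $(E,\iota)$ with an injection $\iota\colon G\hookrightarrow E(\Q)_{\tor}$, has genus $0$ and a rational cusp, hence is isomorphic over $\Q$ to the projective line and carries infinitely many rational points. Concretely I would write down the Tate normal form / Kubert parametrization in each of these fifteen cases, exhibiting an explicit one-parameter family of elliptic curves realizing $G$ inside the rational torsion, which settles the occurrence direction.

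The substance of the theorem is the converse: no other group arises. A point of exact order $N$ over $\Q$, or a subgroup $\Z/M\Z\times\Z/N\Z$, yields a non-cuspidal $\Q$-rational point on the corresponding modular curve, so it suffices to prove these curves have only cuspidal rational points whenever the structure lies outside the admissible range. I would first dispose of the low genus-$1$ levels: for $N=11,14,15$ the curve $X_1(N)$ is an elliptic curve of Mordell--Weil rank $0$ over $\Q$, so I compute its finite set of rational points and verify that each is a cusp; the remaining sporadic composite levels ($N=13,16,18$, and the relevant $X_1(2,N)$) are handled by explicit determination of the rational points. This eliminates every composite order and every prime order except the genuinely hard case of large primes.

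The heart of the argument, and the step I expect to be the main obstacle, is to show that for a prime $p$ (it suffices to treat $p=11$ and $p\ge 13$) the modular curve has no non-cuspidal rational point. Here I would invoke Mazur's Eisenstein ideal machinery. Let $J_0(p)$ be the Jacobian of $X_0(p)$ and let $\mathfrak{I}$ be the Eisenstein ideal in its Hecke algebra. The crucial input is that the \emph{Eisenstein (winding) quotient} $J^e$ of $J_0(p)$ has $J^e(\Q)$ finite, i.e.\ Mordell--Weil rank $0$; Mazur deduces this from the structure of the Hecke algebra modulo $\mathfrak{I}$ together with the nonvanishing of the $L$-value recorded by the winding element. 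Given a hypothetical rational point of order $p$ on some $E/\Q$, the induced non-cuspidal point $x\in X_0(p)(\Q)$ has class $[x]-[\infty]$ torsion in $J^e$; for a suitable odd auxiliary prime $\ell$ of good reduction the torsion of $J^e(\Q)$ injects into the special fibre, so $x$ and the cusp $\infty$ reduce to the same point modulo $\ell$. Finally, the composite $X_0(p)\to J_0(p)\to J^e$ is a \emph{formal immersion} at $\infty$ in characteristic $\ell$ — verified by a nondegeneracy computation for the Hecke action on $q$-expansions — and a formal immersion cannot identify two distinct sections sharing the same reduction, forcing $x=\infty$ and the desired contradiction.

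The difficulty is concentrated entirely in this last step: establishing that the Eisenstein quotient has rank $0$ and verifying the formal-immersion criterion are precisely the deep contributions of Mazur's paper, and I do not expect any soft argument to replace them. Once these are in hand, assembling the fifteen genus-$0$ families, the finite genus-$1$ and sporadic checks, and the Eisenstein-ideal nonexistence result for large primes yields exactly the stated list and completes the proof.
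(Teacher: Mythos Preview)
Your outline is a faithful high-level sketch of Mazur's original argument, but there is nothing in the paper to compare it against: the paper does not prove this statement. Theorem~\ref{Mazur} is quoted as a classical result of Mazur (1977) and used as a black box --- specifically, only the consequence that if $\ell\mid\#E_{\tor}(\Q)$ then $\ell\in\{2,3,5,7\}$, and really only the cases $\ell=2,3,5$, enter into the proofs of Lemma~\ref{lemma1} and Theorem~\ref{1.1}. No proof, sketch, or reference beyond the attribution is given.

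So the mismatch here is not mathematical but contextual: you have written a several-paragraph plan for one of the deepest theorems of twentieth-century arithmetic geometry, whereas the paper simply cites it. If the intent is to supply what the paper supplies, the correct ``proof'' is a one-line citation to Mazur's \emph{Modular curves and the Eisenstein ideal} (Publ.\ Math.\ IH\'ES \textbf{47} (1977), 33--186). If instead you genuinely want to reproduce Mazur's proof, your outline is accurate in its architecture (genus-$0$ existence via Kubert parametrizations, sporadic low-genus checks, and the Eisenstein-quotient / formal-immersion argument for primes $p\ge 11$), but be aware that turning that last paragraph into an actual proof is the content of Mazur's entire paper and cannot be compressed further without simply citing it.
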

Furthermore, recall that if $E / \Q$ is an elliptic curve with good reduction at $p\nmid m$ for some $m$, then the reduction map 
\begin{equation}\label{lemma:injectivityoftorsion}
    \pi \colon E(\Q) \to E(\F_p),
\end{equation}
is injective when restricted to $m$-torsion \cite{Silverman}. As a consequence, we get the following results.
\begin{lem}\label{lemma1}\label{Initial congruence}
Suppose that $E / \Q$ is an elliptic curve and that $\ell\mid\#E_{\tor}(\Q)$. Then for all primes $p\nmid 2\ell N$, we have
$$a_E(p^d) \equiv 1+p+p^2+\cdots +p^d \mod \ell. $$
\end{lem}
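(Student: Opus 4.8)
The plan is to first establish the congruence for $d=1$ and then bootstrap to all $d$ via the Hecke recurrence. For the base case, note that since $\ell \mid \#E_{\tor}(\Q)$ and $E_{\tor}(\Q)$ is a finite abelian group, Cauchy's theorem (or directly Mazur's classification, Theorem \ref{Mazur}) produces a rational point $P \in E(\Q)$ of order exactly $\ell$. Because $p \nmid 2\ell N$, the curve has good reduction at $p$ and the reduction map $\pi \colon E(\Q) \to E(\F_p)$ of \eqref{lemma:injectivityoftorsion} is injective on $\ell$-torsion, so $\pi(P)$ has order $\ell$ in $E(\F_p)$. Hence $\ell \mid \#E(\F_p)$, and Theorem \ref{Modularity} gives
$$a_E(p) = p + 1 - \#E(\F_p) \equiv p + 1 \pmod{\ell},$$
which is exactly the claimed formula for $d=1$; the case $d=0$ is just the normalization $a_E(1)=1$.

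For the inductive step I would invoke part (2) of Proposition \ref{Newforms} with weight $k=2$ and trivial nebentypus (since $f_E \in S_2(\Gamma_0(N))$), namely $a_E(p^m) = a_E(p)\,a_E(p^{m-1}) - p\,a_E(p^{m-2})$. Assuming the congruence holds for the exponents $m-1$ and $m-2$, I would reduce this recurrence modulo $\ell$ and substitute $a_E(p) \equiv p+1$ to obtain
$$a_E(p^m) \equiv (p+1)\bigl(1 + p + \cdots + p^{m-1}\bigr) - p\bigl(1 + p + \cdots + p^{m-2}\bigr) \pmod{\ell}.$$
Expanding the first product gives $1 + 2p + 2p^2 + \cdots + 2p^{m-1} + p^m$, and subtracting $p + p^2 + \cdots + p^{m-1}$ collapses the coefficient of each intermediate power $p^j$ ($1 \le j \le m-1$) from $2$ back to $1$, leaving $1 + p + \cdots + p^m$. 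This is precisely the desired formula for exponent $m$, so the induction closes.

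The only genuinely delicate point is the passage from $\ell \mid \#E_{\tor}(\Q)$ to $\ell \mid \#E(\F_p)$: everything rests on the injectivity of the reduction map restricted to $\ell$-torsion, which is exactly why the hypothesis excludes $p = \ell$ (the asserted congruence can fail there). The remaining ingredients—Cauchy's theorem, the modularity formula $a_E(p) = p+1-\#E(\F_p)$, and the Hecke recurrence—are standard, and the inductive step is a routine telescoping identity rather than a serious obstacle. As a final sanity check I would confirm that the nebentypus is trivial, so that the multiplier of $a_E(p^{m-2})$ in the recurrence is exactly $p$ and not $\chi(p)\,p$, since that is what makes the telescoping produce the clean geometric sum.
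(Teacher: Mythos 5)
Your proof is correct and follows essentially the same route as the paper: the crux in both arguments is that injectivity of the reduction map on torsion forces $\ell \mid \#E(\F_p)$, hence $a_E(p) \equiv p+1 \pmod{\ell}$, after which the Hecke recurrence propagates the congruence to all prime powers. The only difference is cosmetic: the paper finishes by observing $\alpha_p \equiv 1$, $\beta_p \equiv p \pmod{\ell}$ in the closed form $a_E(p^m) = (\alpha_p^{m+1}-\beta_p^{m+1})/(\alpha_p-\beta_p)$, whereas you run a direct induction on the recurrence, which is an equivalent and arguably slightly more elementary way to close the argument since it works entirely with rational integers.
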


\begin{proof}
Since our newform satisfies $a_E(p^m)=a_E(p)a_E(p^{m-1})-pa_E(p^{m-2})$ we get that 
$$(px^2-a_E(p)x+1)\Big(\sum_{m\geq0}a_E(p^m)x^m\Big)=1\iff \sum_{m\geq0}a_E(p^m)x^m=\frac{1}{px^2-a_E(p)x+1}.$$ Expanding the RHS as a power series we obtain $$\frac{1}{px^2-a_E(p)x+1}=\frac{1}{(\alpha_px-1)(\beta_px-1)}=\sum_{m\geq 0}\frac{\alpha_p^{m+1}-\beta_p^{m+1}}{\alpha_p-\beta_p}x^m,$$ and hence $$a_E(p^m)=\frac{\alpha_p^{m+1}-\beta_p^{m+1}}{\alpha_p-\beta_p}.$$

Since $p\nmid 2N$, $E$ has good reduction at $p$. Therefore, by Theorem \ref{Modularity}, $a_E(p)=p+1-\#E(\F_{p})$. Because $\ell\mid\#E_{\tor}(\Q)$ and $\ell \neq p$, it follows from Theorem \ref{Mazur} that $\ell\mid\#E(\F_p)$. Reducing mod $\ell$ gives $a_E(p)\equiv p+1\bmod \ell$. Hence $\alpha_p\equiv 1,\beta_p\equiv p\bmod \ell$ and the result follows.
\end{proof}

\begin{lem}
If $E/\Q$ has a rational $2\ell$-torsion point with $\ell=3,5$, then for all $\gcd(n,2N)=1$, we have $|a_E(n)|\neq 1$.
\end{lem}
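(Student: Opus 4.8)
The plan is to exploit the two torsion hypotheses separately: the rational $2$-torsion point forces $a_E(p)$ to be even, which places the sequence inside the regime of Lemma~\ref{nondefect}, while the rational $\ell$-torsion point supplies the congruence of Lemma~\ref{Initial congruence} needed to kill the one surviving defective case. First I would reduce to prime powers: by Proposition~\ref{Newforms}(1) the coefficients are multiplicative on coprime arguments, so if $|a_E(n)|=1$ with $\gcd(n,2N)=1$ then every factor $a_E(p^{e})$, $e=\ord_p(n)$, is an integer dividing $\pm 1$ and hence equals $\pm 1$. Thus it suffices to prove $a_E(p^{e})\neq \pm 1$ for each prime $p\nmid 2N$ and each $e\geq 1$.

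Next I would settle the parity. Since $E$ has a rational $2$-torsion point, Theorem~\ref{Mazur} together with injectivity of reduction on prime-to-$p$ torsion gives $2\mid \#E(\F_p)$ for $p\nmid 2N$, so by Theorem~\ref{Modularity} we get $a_E(p)=p+1-\#E(\F_p)\equiv 0 \pmod 2$; hence $A:=a_E(p)$ is even. Writing $u_{e+1}(\alpha_p,\beta_p)=a_E(p^{e})$ as in Proposition~\ref{Newforms}(3), with $A$ even and $B:=p$ odd the recurrence reduces mod $2$ to $u_{n}\equiv u_{n-2}\pmod 2$, so $u_{e+1}$ is even exactly when $e$ is odd. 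This already disposes of all odd $e$ (in particular $a_E(p)$ itself, the case $e=1$). For even $e\geq 2$ we have $e+1\geq 3$, and an equality $a_E(p^{e})=\pm 1$ would make $u_{e+1}$ a unit, hence a defective Lucas number with no primitive prime divisor, and odd. Lemma~\ref{nondefect} classifies such terms, so $(e+1,A,B,k)=(e+1,\pm2,p,2)$ must appear in that list.

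The last step is to evaluate the finitely many classified tuples and eliminate them. A direct computation shows the only one that produces a unit is $(3,\pm 2,3,2)$, where $u_3=A^2-B=4-3=1$; every other listed tuple (for instance $(5,\pm 2,11,2)$, giving $u_5=A^4-3A^2B+B^2=5$) yields a value that is not equal to $\pm1$. Thus a unit value can occur only for $p=3$, $e=2$, with $a_E(3)=\pm2$. When $\ell=5$ this is killed by Lemma~\ref{Initial congruence}: since $3\nmid 2\ell N$ we get $a_E(9)\equiv 1+3+9\equiv 3\pmod 5$, which is not $\pm1$ modulo $5$. I expect the genuine obstacle to be the boundary case $p=\ell$: for $\ell=3$ the surviving tuple has $p=3=\ell$, exactly where Lemma~\ref{Initial congruence} is unavailable because it requires $p\nmid 2\ell N$. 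Curves with a rational $6$-torsion point and good reduction at $3$ can indeed have $a_E(3)=\pm2$ (e.g.\ the conductor-$14$ curve with torsion $\Z/6\Z$, for which $a_E(9)=1$), so this case must be removed by the coprimality condition $\ell\nmid n$ carried in Theorem~\ref{1.1}; with $p\neq\ell$ throughout, no classified defective tuple survives and every prime-power factor is forced to be $\neq\pm1$, completing the argument.
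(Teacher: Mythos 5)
Your overall strategy matches the paper's: use the rational $2$-torsion point to force $a_E(p)$ even, reduce $|a_E(n)|=1$ by multiplicativity and parity to a defective Lucas value $u_{d}(\alpha_p,\beta_p)=\pm 1$ with $d\geq 3$ odd, and then kill the surviving classified cases with the congruence of Lemma \ref{Initial congruence}. The gap is in your enumeration of the survivors. Lemma \ref{nondefect} allows not only the two sporadic tuples from Table \ref{table1} but also rows 1--3 of Table \ref{table2}, which are \emph{parameterized families} with $A=\pm m$ an arbitrary even integer, not just $A=\pm 2$. Row 1, namely $(A,B)=(\pm m,p)$ with $p=m^2+1$, gives $u_3=m^2-(m^2+1)=-1$, which is a unit. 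So besides $(p,a_E(p))=(3,\pm 2)$ you must also exclude the infinite family of candidates $(p,a_E(p))=(m^2+1,\pm m)$ with $a_E(p^2)=-1$, e.g.\ $(5,\pm 2)$, $(17,\pm 4)$, $(37,\pm 6)$. Your assertion that the only classified tuple producing a unit is $(3,\pm 2,3,2)$ is therefore false, and as written the argument does not close. The paper disposes of this family with exactly the tool you set up but did not deploy here: Lemma \ref{Initial congruence} gives $-1=a_E(p^2)\equiv 1+p+p^2\pmod{\ell}$, i.e.\ $p^2+p+2\equiv 0\pmod{\ell}$, which one checks has no solution modulo $3$ or modulo $5$.

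Your closing observation, on the other hand, is correct and identifies a real defect in the paper's own proof. The paper eliminates $(p,a_E(3))=(3,\pm 2)$ for $\ell=3$ by asserting that $3\mid\#E_{\tor}(\Q)$ forces $3\mid N$, which is false: curves with $\Z/6\Z$ torsion and good reduction at $3$ exist (conductors $14$ and $20$, for instance), and for any such curve the injection $\Z/6\Z\hookrightarrow E(\F_3)$ together with the Hasse bound forces $\#E(\F_3)=6$, hence $a_E(3)=-2$ and $a_E(9)=1$. So the lemma as stated fails for $\ell=3$ and genuinely requires the hypothesis $3\nmid n$ carried by Theorem \ref{1.1}(1); your diagnosis of where that coprimality condition is actually needed is exactly right. (For $\ell=5$ the case $p=3$ dies even more simply: $\Z/10\Z\hookrightarrow E(\F_3)$ would violate the Hasse bound, so such a curve cannot have good reduction at $3$ at all.) Repair the missing defective family from Table \ref{table2} and your write-up becomes the corrected version of the paper's argument.
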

\begin{proof}
The various possibilities for $|a_E(p^{d-1})|=1$ are in Tables $1$ and $2$ of the appendix. With $a_E(p)$ even, we can rule out some sporadic terms. For the weight $2$ case, we shall only have $u_3(\alpha,\beta)=1, p=3, a_E(p)=\pm 2$ or $u_3(\alpha,\beta)=-1, p=a_E(p)^2+1$. If it is the latter, then by lemma \ref{Initial congruence} we have $p\equiv p^2+2p+2\bmod \ell$. However for $\ell=3,5$, $p^2+p+2\not \equiv 0\bmod \ell$ for any choice of $p$. The former case cannot happen if $5\mid \#E_{\tor}(\Q)$ because $3+1\not \equiv \pm 2\bmod 5$. If $3\mid \#E_{\tor}(\Q)$ then this tells us $3\mid N$, so $3\nmid n$ and hence $p$ cannot take the value of $3$.
\end{proof}

\begin{proof}[Proof of Theorem \ref{1.1}]
The existence of a rational $2$-torsion point and the injectivity of torsion map tell us that for $p\nmid 2N$, $a_E(p^{d-1})$ odd if and only if $d$ is odd. Let $\ell=3$ or $5$ dividing $\# E_{\text{tor}}(\Q)$. Given an odd prime $\ell_1\equiv -1\bmod \ell$, one can see from Lemma \ref{Initial congruence} that in order to have $a_E(p^{d-1})=\ell_1\equiv -1\bmod \ell$ given that $d$ is odd, then $p\equiv 1\bmod \ell$ and $d\equiv -1\bmod \ell$. Hence, Proposition \ref{PropB} tells us that we only need to look for the odd primes $d$ that are congruent to $-1\bmod \ell$ and divide $\ell_1(\ell_1^2-1)$. This rules out the possibility that $d=3$ and so we can avoid running into the Thue equation with infinitely many integer points.

Some inadmissable composite values $\alpha$ appear in the first part of Theorem \ref{1.1}, where a $3$-torsion point exists. For these listed composite numbers, they satisfy the following properties: we cannot properly factorize them into two admissable odd values. Therefore, $a_E(n)=\alpha$ implies that $n$ only has a unique prime factor, i.e. $n=p^{d-1}$ for some $d$. Indeed, if $p^u\mid n$ and $n=p^un_0$ where $n_0$, coprime to $p$, is non-trivial, then $a_E(p^u)a_E(n_0)=\alpha$ and this is a contradiction to the assumption. We then consider whether $d$ is allowed to be a composite value. If so, say prime $c\mid d$, then $u_c=a_E(p^{c-1})$ divides $\alpha$. We analyze what could be the possible values of $u_c$ and also what is the primitive prime divisor of $u_d$ to infer what $c$ and $d$ are. In the process, note that if $\alpha$ is the product of at most two powers of odd primes, then $d$ cannot be properly factorized into two different numbers, i.e. it is either a square or a prime. This is because if $d=c_1c_2$ then $u_{c_1}$, $u_{c_2}$, $u_d$ each occupy a different prime factor, assuming that they are non-defective. However, $u_d=\alpha$, a multiple of the first two terms, only has at most $2$ prime divisors. (One may notice that in our case here, the only possible defective value occurs when $p=11, a_E(p)=\pm 2$ at $u_5=5$ in Table \ref{table1} of the Appendix. Theorem \ref{Modularity} tells us that $12\equiv \pm 2\bmod 3$ since $3\mid E_{\text{tor}}(\Q)$, but the congruence is not possible.) Note also that none of the listed composite values can have three distinct prime factors, since the minimal possible one, $3\times 5\times 7=105$ is already larger than $100$.

With these results and the assumption that $c\mid d$ and $c$ is prime, we can greatly reduce the number of cases to consider. Then it remains to run through the possible solutions to Thue equations. We hereby leave the case-by-case calculations to the reader.
\end{proof}
\subsection{Proof of Theorem \ref{1.2} and \ref{1.3}}

We state the relationship between symmetric square $L$-functions of weight $2$ newforms and $L$-functions of weight $3$ newforms. We explain the relationship between the $K3$ surfaces $X_\lambda$ and the elliptic curves $E_\lambda$ using the work of Ahlgren, Ono, Penniston \cite{AOP}, and Inose and Shioda \cite{IS,Shioda}.  
\subsubsection{Symmetric square $L$-functions and the relation between $\{X_\lambda\}$ and $\{E_\lambda\}$}{\label{ZetaFunction}}\hfill

\vspace{3mm}
Recall from \cite{AOP} the family of $K3$ surfaces $\{X_\lambda \}$ over $\Q$ to be the double cover of the projective plane branched along the smooth sextic curve $$U_\lambda : XYZ(Z+\lambda Y)(Y+Z)(X+Z)=0.$$ According to Section 3 of \cite{AOP}, if $\psi:X_\lambda\to \mathbb{P}^2$ denotes the natural map, then $X_\lambda\setminus \psi^{-1}(U_\lambda)$ is a smooth affine surface of equation $s^2=xy(x+1)(y+1)(x+\lambda y)$ defining the function field of $X_\lambda$. This family corresponds to elliptic curves $\{E_\lambda\}$ with conductor $N_\lambda$ 
by $$E_\lambda : y^2=(x-1)\left(x^2-\frac{1}{\lambda+1}\right).$$

Consider $g(z)$ and $f(z)$ to be respectively a weight $2$ and a weight $3$ newform. We have by definition that $$L(g,s)=\prod_p\frac{1}{1-b(p)p^{-s}+p^{1-2s}},$$ is the product of local Euler factor at $p$. We can rewrite the denominator as $(1-\pi_{p}p^{-s})(1-\bar{\pi}_{p}p^{-s})$ where $\pi_{p},\,\bar{\pi}_{p}$ are the roots of $F_p(x)=x^2-b(p)x+p$. Furthermore, the symmetric square $L$-function of $g$ is given by
\begin{equation}
    \begin{aligned}
    L(\mathrm{Sym}^2(g),s)&=\prod_p\frac{1}{(1-p^{1-s})(1-\pi_p^2p^{-s})(1-\bar{\pi}_p^2p^{-s})}\\&=\zeta(s-1)\prod_p\frac{1}{1-(b(p)^2-2p)p^{-s}+p^{2-2s}},
    \end{aligned}
\end{equation}
where $b(p)^2-2p=\pi_p^2+\bar{\pi}_p^2$. The $L$-function of $f$ is given by $$L(f,s)=\prod_p\frac{1}{1-a(p)p^{-s}+\chi(p)p^{2-2s}}.$$
 
The denominator of the local $p$ factor of the symmetric square $L$-function of $g$ has the shape of the local $p$ factor of the $L$-function of $f$.

Suppose now that $g_\lambda(z)=\sum_{n\geq 1}b_\lambda(n)q^n$ is the newform of weight 2 attached to $E_\lambda$ and that $f_\lambda(z)=\sum_{n\geq 1}a_\lambda(n)q^n$ is the newform of weight $3$. From the local zeta function of $X_\lambda$ at primes $p$ with $\ord_p(\lambda(\lambda+1))=0$, proved in Section 4 of \cite{AOP}, we have $$Z(X_\lambda/\F_p,x)=\frac{1}{(1-x)(1-p^2x)(1-px)^{19}(1-\gamma px)(1-\gamma\pil^2x)(1-\gamma\pilb^2x)}.$$ By (29) of \cite{AOP}, $X_\lambda$ is singular (also known as ``modular'') if $$L(X_\lambda,s):=\prod_{p\nmid \lambda(\lambda+1)} Z^*(X_\lambda/\F_p,s)=\sum_{n\geq 1}\frac{a_\lambda(n)}{n^s},$$ where $$Z^*(X_\lambda/\F_p,s):=\frac{1}{(1-\gamma\pil^2p^{-s})(1-\gamma\pilb^2 p^{-s})}.$$ Expanding yields 
\begin{equation}{\label{mainformula}}
a_\lambda(p)=\gamma(\pil^2+\pilb^2)=\gamma(b_\lambda(p)^2-2p).  
\end{equation}
This justifies the use of $\lambda$ for the parametrization of $X_\lambda$ and $E_\lambda$ but also how they are related as was observed by Inose and Shioda \cite{IS}. Their result states in the case of a $K3$ surface of Picard number 20 that its $L$-function corresponds to the symmetric square of the $L$-function of an elliptic curve with complex multiplication up to simple terms. Namely, the Picard number jumps to $20$ precisely for the values $\lambda$ in (\ref{eta}).

Notice that we haven't used the modularity of $X_\lambda$ to define its $L$-function. We only used the pair of Galois conjugates $\pil$ and $\pilb$ obtained from $g_\lambda$. It follows that for all $\lambda\in \Q\setminus\{0,-1\}$, we can apply Deligne's bound to $b_\lambda(p)$ and define a Lucas sequence $u_n(\alpha_{\lambda,p},\beta_{\lambda,p})=a_\lambda(p^{n-1})$ from $F_p(x)=x^2-a_\lambda(p)x+p^2$. It is thus natural to consider for all $\lambda\in \Q\setminus \{0,-1\}$ the family of $L$-functions $\{L(X_\lambda,s)\}$ given by $$L(X_\lambda,s)=\sum_{n\geq 1}\frac{a_\lambda(n)}{n^s}:=\prod_{p\nmid \lambda(\lambda+1)}\frac{1}{1-a_\lambda(p)p^{-s}+p^{2-2s}}.$$ 
In the case of the seven exceptional values $\lambda$, the relation between the two aforementioned families is deeper. The proof of Theorem 1.2 of \cite{AOP} is dictated by the complex multiplication underlying $E_\lambda$ and the fact that for a fixed prime $\ell$, there are no sets of primes $p$ with positive density satisfying (\ref{mainformula}) modulo $\ell$ \cite{OS,Ribet}, as another way of seeing Inose and Shioda's theorem.

We now proceed with preliminary results based on (\ref{mainformula}).
\begin{lem}{\label{TM2}}
For $\lambda\in \mathbb{Q}\setminus \{0, -1\}$, $a_\lambda(p)$ is even for $p\nmid 2N_\lambda$. Furthermore, we have that $a_\lambda(p^d)$ is odd if and only if $d$ is even.
\end{lem}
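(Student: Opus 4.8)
The plan is to reduce the entire statement to an elementary parity computation, bootstrapping from the single observation that the elliptic curve $E_\lambda$ carries an obvious rational $2$-torsion point. The key bridge is the symmetric-square relation (\ref{mainformula}), which expresses $a_\lambda(p)$ in terms of the weight-$2$ coefficient $b_\lambda(p)$ of $E_\lambda$.

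First I would record that the defining equation (\ref{EC}), $E_\lambda: y^2=(x-1)\left(x^2-\frac{1}{\lambda+1}\right)$, has the rational point $(1,0)$, which is $2$-torsion; hence $2\mid \#E_{\lambda,\tor}(\Q)$. Applying Lemma \ref{Initial congruence} with $\ell=2$ and $d=1$ (equivalently, using the injectivity of the reduction map on $2$-torsion to get $2\mid \#E_\lambda(\F_p)$ together with Theorem \ref{Modularity}) yields $b_\lambda(p)=p+1-\#E_\lambda(\F_p)\equiv 1+p\equiv 0\pmod 2$ for every $p\nmid 2N_\lambda$, since such a $p$ is odd. Thus $b_\lambda(p)$ is even. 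Feeding this into (\ref{mainformula}), namely $a_\lambda(p)=\gamma(b_\lambda(p)^2-2p)$ with $\gamma=\pm 1$, and noting that $2p$ is even, the parity of $a_\lambda(p)$ equals that of $b_\lambda(p)^2$, hence that of $b_\lambda(p)$. Therefore $a_\lambda(p)$ is even, which settles the first assertion.

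For the second assertion I would reduce the weight-$3$ recurrence from $(\dagger)$, $a_\lambda(p^{m+1})=a_\lambda(p)\,a_\lambda(p^m)-\chi(p)p^{2}a_\lambda(p^{m-1})$, modulo $2$. Since $p$ is odd and $\chi(p)=\pm 1$, the coefficient $\chi(p)p^2$ is odd, while $a_\lambda(p)$ is even by the first part; reducing mod $2$ collapses the recurrence to $a_\lambda(p^{m+1})\equiv a_\lambda(p^{m-1})\pmod 2$. Starting from the base cases $a_\lambda(1)=1$ (odd) and $a_\lambda(p)$ (even), a straightforward induction on $d$ gives $a_\lambda(p^d)\equiv a_\lambda(p^{d\bmod 2})\pmod 2$, so that $a_\lambda(p^d)$ is odd exactly when $d$ is even.

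The computation itself is elementary; the only points requiring genuine care are locating the $2$-torsion point on $E_\lambda$ and verifying that passing the parity of $b_\lambda(p)$ through the squaring in (\ref{mainformula}) is harmless (it is, because $2p$ contributes nothing mod $2$). The real conceptual content is that the weight-$3$ coefficients $a_\lambda(p)$ inherit evenness from the weight-$2$ coefficients $b_\lambda(p)$ of a curve with rational $2$-torsion; this is precisely the hypothesis ``$a(p)$ even'' that the Lucas-sequence analysis of Section \ref{S2} (for instance Lemma \ref{nondefect}) requires, so I would present this lemma as the arithmetic input that unlocks the rest of the argument.
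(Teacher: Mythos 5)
Your proposal is correct and follows essentially the same route as the paper's own proof: locate the rational $2$-torsion point $(1,0)$ on $E_\lambda$, use injectivity of reduction on torsion to force $\#E_\lambda(\F_p)$ even, deduce the evenness of $a_\lambda(p)$ from $a_\lambda(p)=\gamma(b_\lambda(p)^2-2p)$, and then read off the parity of $a_\lambda(p^d)$ from the recurrence modulo $2$. The only difference is that you spell out the induction for the second assertion, which the paper dismisses as immediate.
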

\begin{proof}
Since $p$ is a prime of good reduction, the map from $E_{\lambda}^{\tor}(\Q)$ to $E_\lambda(\mathbb{F}_p)$ is injective. As $(1,0)$ is a $2$-torsion point for $E_\lambda$, we obtain that $2\mid \# E_{\lambda}^{\tor}(\mathbb{Q})$ and it follows that $\#E_\lambda(\mathbb{F}_p)$ is even. Therefore, for $p$ an odd prime, we obtain
$$a_{\lambda}(p)=\gamma(b_\lambda(p)^2-2p)\equiv b_\lambda(p)\equiv p+1-\#E_\lambda(\mathbb{F}_p)\bmod 2.$$
The second assertion is immediate from the recurrence relation satisfied by $a_\lambda(n)$.
\end{proof}

\begin{proof}[Proof for Theorem \ref{1.2} and \ref{1.3}]
We begin with the following observations.

\begin{lem}{\label{mainlem}}
Let $\lambda\in \Q \setminus\{0,-1\}$. Then $|a_\lambda(n)|\neq 1$ for all $\gcd(n,2N_\lambda)=1$. Furthermore, for $\ell_1$ an odd prime, we have that $a_\lambda(n)=\pm\ell_1$ reduces to $u_d(\alpha_p,\beta_p)=a_\lambda(p^{d-1})=\pm\ell_1$ where $d$ is an odd prime. 
\end{lem}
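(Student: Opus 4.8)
The plan is to prove Lemma \ref{mainlem} in two parts, mirroring the structure of Lemma \ref{nondefect} and Theorem \ref{thm2.5} established earlier. The whole argument rests on the key structural fact from Lemma \ref{TM2}: since each $E_\lambda$ carries a rational $2$-torsion point, $a_\lambda(p)$ is even for every $p\nmid 2N_\lambda$, and consequently $u_d(\alpha_p,\beta_p)=a_\lambda(p^{d-1})$ is odd precisely when $d$ is odd. This parity constraint is what lets us apply the weight-$3$ specialization of the classification of defective Lucas numbers.

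For the first assertion, that $|a_\lambda(n)|\neq 1$, I would argue by multiplicativity (property \ref{prop2} of $(\dagger)$). If $|a_\lambda(n)|=1$ with $\gcd(n,2N_\lambda)=1$ and $n>1$, then for some prime $p\mid n$ we would need $|a_\lambda(p^{d-1})|=|u_d(\alpha_p,\beta_p)|=1$ with $d\ge 2$. A value $u_d=\pm 1$ is in particular defective, so I would invoke Lemma \ref{nondefect}: with $a_\lambda(p)$ even and weight $k=3$, the only defective odd terms come from $(d,A,B,k)\in\{(3,\pm2,3,2),(5,\pm2,11,2)\}$ together with rows 1--3 of Table \ref{table2}. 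All of the entries in the first set have weight $k=2$, hence cannot occur here since our Lucas sequence has $B=p^2$ (weight $3$); and the relevant Table \ref{table2} rows giving $\pm 1$ likewise fail in weight $3$, as noted in the remark following Lemma \ref{nondefect}. Thus no $u_d=\pm 1$ can arise for $d\ge 2$, which forces $|a_\lambda(n)|\neq 1$.

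For the second assertion, suppose $a_\lambda(n)=\pm\ell_1$ with $\ell_1$ an odd prime. Since $\ell_1$ is odd, multiplicativity together with the fact that $|a_\lambda(m)|\ne 1$ for nontrivial $m$ coprime to $p$ forces $n$ to be a prime power $n=p^{d-1}$, with $p\nmid 2N_\lambda$; otherwise $\pm\ell_1$ would factor as a product of at least two values each of absolute value $>1$, contradicting primality. So we reduce to $u_d(\alpha_p,\beta_p)=\pm\ell_1$. By Theorem \ref{thm2.5} applied with $m=1$, for the single prime $p\mid n$ the quantity $\ord_p(n)+1=d$ must be $4$ or a prime dividing $\ell_1(\ell_1^2-1)$. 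The case $d=4$ is excluded by parity: Lemma \ref{TM2} requires $d$ even for $u_d$ odd, but $u_4$ even contradicts $u_d=\pm\ell_1$ odd --- wait, here one must be careful, so instead I note $d$ must be even for the target to be odd, yet $d=4$ corresponds to Table \ref{table2} row $4$ whose defective value requires $a_\lambda(p)=\pm m$ odd, impossible since $a_\lambda(p)$ is even. Hence $d$ is a prime dividing $\ell_1(\ell_1^2-1)$, and since $u_d$ must be odd, $d$ itself is an odd prime.

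The main obstacle will be the case $d=4$. Theorem \ref{thm2.5} genuinely admits $d=4$ as a possibility in general weight, and it is precisely this case that, via the $m=2$ curve $C_4^{\mp}$, leads to Thue-type equations with potentially infinitely many solutions that plagued the lower-weight analysis. The delicate point is to show that the even-$a_\lambda(p)$ hypothesis, inherited from the rational $2$-torsion of $E_\lambda$, rules out $d=4$ cleanly: one must verify that the only defective $u_4$ in weight $3$ forces $a_\lambda(p)$ odd (Table \ref{table2} row $4$), so that evenness excludes it, leaving only odd primes $d\mid \ell_1(\ell_1^2-1)$. Once $d$ is pinned to an odd prime, the subsequent finiteness of solutions on the curves $C_2^{\pm}$ is handled by the earlier lemmas, so the entire weight-$3$ analysis collapses onto the $m=1$ curves as desired.
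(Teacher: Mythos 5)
Your proof follows essentially the same route as the paper's: reduce to $u_d(\alpha_p,\beta_p)=a_\lambda(p^{d-1})$ by multiplicativity, use the evenness of $a_\lambda(p)$ coming from the rational $2$-torsion point on $E_\lambda$, rule out $|u_d|=1$ via the classification of defective Lucas numbers in Tables \ref{table1} and \ref{table2}, and then invoke the proof of Theorem \ref{thm2.5} to pin $d$ to an odd prime. Two local slips should be corrected, though neither breaks the argument. First, your parity statement is backwards: Lemma \ref{TM2} says $a_\lambda(p^d)$ is odd iff $d$ is even, so $u_d=a_\lambda(p^{d-1})$ is odd iff $d$ is \emph{odd}; the sentence claiming ``$d$ must be even for the target to be odd'' should say ``odd'', and this parity fact (or your correct fallback that row $4$ of Table \ref{table2} forces $a_\lambda(p)=\pm m$ odd) is what eliminates $d=2$ and $d=4$. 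Second, in your closing paragraph you attach the curve $C_4^{\mp}$ to the case $d=4$, but $C_4^{\mp}$ arises from $m=2$, i.e.\ from $a_\lambda(p^4)=u_5$, so it belongs to $d=5$; the case $d=4$ corresponds to $a_\lambda(p^3)$, which is even and hence never equal to $\pm\ell_1$ in the first place.
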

\begin{proof}
Let $\gcd(n,2N_\lambda)=1$, then this reduces by $(\dagger)$ to $|a_\lambda(p^{d-1})|=1=u_{d}(\alpha_p,\beta_p)$ for $d\geq 3$. Indeed, $u_2(\alpha_p,\beta_p)$ is even for $p\nmid 2N_\lambda$. Since $|u_{d}(\alpha_p,\beta_p)|= 1$ is a defective value, it suffices to look at Tables 1 and 2 but none of the cases apply. Hence, from the proof of Theorem \ref{thm2.5}, we get that $d$ is an odd prime.  
\end{proof}

\begin{cor}{\label{ConsThue}}
For any odd prime $\ell$ and $\gcd(n,2N_\lambda)=1$, we have that $a_\lambda(n)=\pm\ell_1$ implies $n=p^2$ or $n=p^4$ for some prime $p$.
\end{cor}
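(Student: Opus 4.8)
The plan is to bootstrap from Lemma \ref{mainlem}. For $\ell$ an odd prime, that lemma already forces $a_\lambda(n)=\pm\ell$ to collapse to a single prime power $n=p^{d-1}$ with $u_d(\alpha_p,\beta_p)=a_\lambda(p^{d-1})=\pm\ell$ and $d$ an odd prime. Since $d$ is odd, the exponent $d-1=2m$ is even, so $n=p^{2m}$ for some $m\ge 1$; this is also consistent with Lemma \ref{TM2}, as $\pm\ell$ is odd and $a_\lambda(p^{d-1})$ is odd exactly when $d-1$ is even. It therefore remains only to prove $m\in\{1,2\}$, i.e. to exclude $m\ge 3$ (equivalently $d\ge 7$).

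First I would clear the two smallest primes directly from Theorem \ref{thm2.5}. There $\ord_p(n)+1=d$ is either $4$ or a prime dividing $\ell(\ell^2-1)$; since $d$ is an odd prime we have $d\ne 4$, so $d$ is an odd prime divisor of $\ell(\ell^2-1)$. For $\ell=3$ this forces $d=3$, and for $\ell=5$ it forces $d\in\{3,5\}$, so in both cases $n=p^2$ or $n=p^4$ with no Thue equation at all. This isolates the range $7\le\ell\le 97$ (the only primes relevant to Theorems \ref{1.2} and \ref{1.3}), where $\ell(\ell^2-1)$ may carry odd prime divisors exceeding $5$ and Theorem \ref{thm2.5} by itself still permits $d=7,11,13,\dots$.

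For that range I would appeal to the Thue machinery of Section \ref{IPSC}. The decisive structural feature of the weight-$3$ setting is that $F_p(x)=x^2-a_\lambda(p)x+p^2$, so in the notation of (\ref{Thue}) one has $X=p^2$ and $Y=a_\lambda(p)^2$, and \emph{both are perfect squares}. Using the identity $F_{2m}(\chi(p)p^{k-1},a(p)^2)=a(p^{2m})$ of Section \ref{IPSC}, which in our case reads $F_{2m}(p^2,a_\lambda(p)^2)=a_\lambda(p^{2m})$, the equation $a_\lambda(p^{2m})=\pm\ell$ becomes a solution of the Thue equation $F_{2m}(X,Y)=\pm\ell$ with both coordinates square, namely $X=p^2$ and $Y=a_\lambda(p)^2$. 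Lemma \ref{NoThue} states precisely that for $7\le\ell\le 97$ no such square--square solution exists once $m\ge 3$. Hence $m\le 2$, and combining with the previous paragraph we obtain $n=p^2$ or $n=p^4$ in every admissible case.

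I expect the entire difficulty to sit in the exclusion of $m\ge 3$, that is, in Lemma \ref{NoThue}. For $m=1$ and $m=2$ the governing curves $C^\pm_2$ and $C^\pm_4$ can have infinitely many integer points, and finiteness survives only after imposing that both coordinates be squares; but these two cases are exactly the surviving exponents $n=p^2,p^4$, so for the present corollary they need not be solved. The real work is the computational verification, recorded in Tables \ref{thuetable} and \ref{thueGRHtable}, that the higher Thue equations $F_{2m}(X,Y)=\pm\ell$ with $m\ge 3$ have no solution in which $X$ and $Y$ are simultaneously perfect squares, uniformly over the prime range; everything else is bookkeeping from Lemmas \ref{mainlem} and \ref{TM2} and Theorem \ref{thm2.5}.
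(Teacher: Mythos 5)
Your proof is correct and follows essentially the same route as the paper, whose entire proof is the one-line remark that the corollary follows from Lemmas \ref{NoThue} and \ref{TM2}. You simply make explicit the steps the paper leaves implicit: the reduction via Lemma \ref{mainlem} to $n=p^{d-1}$ with $d$ an odd prime (hence $d-1=2m$ even), the separate treatment of $\ell=3,5$ through Theorem \ref{thm2.5} since Lemma \ref{NoThue} only covers $7\le\ell\le 97$, and the exclusion of $m\ge 3$ in that range via the requirement that both Thue coordinates $X=p^2$ and $Y=a_\lambda(p)^2$ be squares.
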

\begin{proof}
The corollary follows immediately from Lemma \ref{NoThue} and Lemma \ref{TM2}. 
\end{proof}
We now complete the proof for prime values. By Proposition \ref{PropB}, $d|\ell_1(\ell_1^2-1)$ and it thus suffices to list all prime divisors of $\ell_1(\ell_1^2-1)$. From Corollary \ref{ConsThue}, we consider only $d=3$ and $d=5$. Finally, for $d=5$, one uses Tables $5,6,7$. For $d=3$, one simply solves the curves $C^\pm_2=\ell_1$ and $C^\pm_2=-\ell_1$. In the special case of the 7 exceptional values of $\lambda$, more information is available through their complex multiplication and the modular forms database \cite{LMFDB}. Whenever we have a possible solution $(p,a_\lambda(p))=(x,y)$, it suffices to evaluate $a_\lambda(p)$ for each of the four $\eta$-products and see if the coefficient matches the solution. 

We now turn to composite values which consists of a case by case analysis of $\alpha=\pm pq,\pm p^2q$ and $\pm\ell_1^m$ where $p,q$ and $\pm \ell_1$ are odd primes. As in the proof of Theorem \ref{1.1}, $a_\lambda(n)=\alpha$ cannot be properly factorized into two admissible odd values (except for the case $\alpha=25,\,\lambda=1$ in Theorem \ref{1.2} that will be treated differently below). This reduces to consider $a(p^{d-1})=\alpha$ and recall in Theorem \ref{1.1}'s proof that $d$ is a square or an odd prime. The claim also holds here, and the only odd square possibly dividing $\ell_1(\ell_1^2-1)$ is $9$. This considerably reduces the set of candidates for $d$ and we thus omit the case by case calculations for the values of $\alpha$. Additionally, for Theorem \ref{1.2}, we can verify \cite{LMFDB} if a solution to the Thue equation describes a Fourier coefficient of $f_\lambda$ since we have located precisely where it must lie in the $q$-expansion. If it does not correspond to a Fourier coefficient, then we rule out this value.

Suppose now that $\alpha=25$ for $\lambda=1$. There is a possibility for $n$ to not be a prime power and it is given by $n=p^{d-1}q^{s-1}$ where $a_\lambda(n)=25=-5\cdot -5=a_\lambda(p^{d-1})a_\lambda(q^{s-1})$, for $p,q$ distinct odd primes. This means that $a_\lambda(p^{d-1})=-5=a_\lambda(q^{s-1})$ but $-5$ only happens at $n=3^2$. Thus $p=q$ and this is not allowed. Hence we get that $a(p^{d-1})=25$. Lemma \ref{nondefect} gives us non-defectiveness and tells us that $d$ cannot have any non-trivial factors, so $d=3, 5$. The two corresponding equations have no valid solution for $f_1(z)$. 
\end{proof}
\vspace{3mm}
\section{Appendix}

\setlength{\tabcolsep}{3pt} 
\renewcommand{\arraystretch}{1.25}
\begin{center} 
\begin{table}[!ht]
\begin{tabular}{|c|c|}
\multicolumn{2}{c}{} \\ \hline
$(A,B)$ & Defective $u_n(\alpha, \beta)$ \\ 
\hline
\hline
$(\pm 1,2^1)$ & $u_5 = -1$, $u_7 = 7$, $u_8 = \mp 3$, $u_{12} = \pm 45$, \\ & $u_{13} = -1$, $u_{18} = \pm 85$, $u_{30} = \mp 24475$ \\ \hline
$(\pm 1,2^2)$ & $u_5 = 5$, $u_{12} = \mp 231$ \\ \hline
$(\pm 1,3^1)$ & $u_5 = 1$, $u_{12} = \pm 160$ \\ \hline
$(\pm 1, 5^1)$ & $u_7 = 1$, $u_{12} = \mp 3024$ \\ \hline
$(\pm 2, 3^1)$ & $u_3 = 1$, $u_{10} = \mp 22$ \\ \hline
$(\pm 2,7^1)$ & $u_8 = \mp 40$ \\ \hline
$(\pm 2, 11^1)$ & $u_5 = 5$ \\ \hline
$(\pm 3, 2^2)$ & $u_4=\pm3$ \\ \hline
$(\pm 3, 2^3)$ & $u_3 = 1$ \\ \hline
$(\pm 4, 5^1)$ & $u_6=\pm 44$\\ \hline
$(\pm 5, 2^3)$ & $u_6 = \pm 85$ \\ \hline
$(\pm 5, 7^1)$ & $u_{10} = \mp 3725$ \\ \hline
\end{tabular}
\medskip
\captionof{table}{\textit{Sporadic examples of defective  $u_n(\alpha, \beta)$.}}
\label{table1}
\end{table}
\end{center}

\begingroup
\setlength{\tabcolsep}{3pt} 
\renewcommand{\arraystretch}{2.5}
\begin{center} 
\begin{table}[!ht]
\begin{small}
\begin{tabular}{|c|c|c|}
\multicolumn{2}{c}{}\\ \hline
$(A,B)$ & Defective $u_n(\alpha, \beta)$ & Constraints on parameters \\ \hline \hline
$(\pm m, p)$ & $u_3 = -1$ & $p = m^2+1$ \\ \hline
$(\pm m, p^{k-1})$ &
$u_3 = \varepsilon 3^r$ &
$\begin{aligned} &p^{k-1}=m^2-3^r\varepsilon\ \text{with}\ 3\nmid m, r>0\end{aligned}$\\ \hline
$(\pm m, -p^{2k-1})$ &
$u_3 = 3^r$ &
$\begin{aligned} &p^{2k-1}+m^2=3^r\ \text{with}\ 3\nmid m, r>0\end{aligned}$\\ \hline
$(\pm m, p^{k-1})$ & $u_4 = \mp m$ & $2p^{k-1}=m^2+1$\\ \hline
$(\pm m, p^{k-1})$ & $u_4 = \pm 2{\color{black}\varepsilon}m$ & $\begin{aligned}\ \ \  2p^{k-1}=m^2-2\varepsilon
\end{aligned}$ \\ \hline

$(\pm m, p^{k-1})$ & 
$u_6 = {\color{black}\pm (-2)^rm(2m^2+(-2)^r)/3}$ &
$\begin{aligned}
&\ \ \ \ 3 p^{k-1}=m^2-(-2)^r,\ r>0
\end{aligned}$\\ \hline

$(\pm m, -p^{2k-1})$ & 
$u_6 = {\color{black}\pm (-2)^rm(2m^2+(-2)^r)/3}$ &
$\begin{aligned}
& 3p^{2k-1}+m^2=2^r,\ r>0
\end{aligned}$\\ \hline
$(\pm m, p^{2k-1})$ & ${\color{black}u_6=\pm \varepsilon m(2m^2+3\varepsilon)}$ & $3p^{2k-1}=m^2-3\varepsilon$\\ \hline
$(\pm m, p^{2k-1})$ & $u_6 = \pm 2^{r+1}{\color{black}\varepsilon} m(m^2 + 3 {\color{black}\varepsilon} \cdot 2^{r-1}) $ &$\begin{aligned} 3 p^{2k-1}=m^2-3\varepsilon\cdot 2^r
 \text{ with } r>0 \end{aligned}$ \\ \hline

 $(\pm m, -p^{2k-1})$ & $u_6 = \pm 2^{r+1}{\color{black}} m(m^2 + 3 {\color{black}} \cdot 2^{r-1}) $ &$\begin{aligned} 3 p^{2k-1}+m^2=3\cdot 2^r
 \text{ with } r>0 \end{aligned}$ \\ \hline

\end{tabular}
\end{small}
\medskip
\captionof{table}{\textit{Parameterized families of defective $u_n(\alpha, \beta)$.}}
\label{table2}
\end{table}
\end{center}
\endgroup

\begingroup
\setlength{\tabcolsep}{5pt} 
\renewcommand{\arraystretch}{1.9}
\begin{center} 
\begin{table}[!ht]
\begin{tabular}{|c|c|}
\multicolumn{2}{c}{} \\ \hline
$(d, D)$ & Integer Solutions to $F_{d-1}(X,Y)=D$  \\ \hline \hline
$(7, \pm 7)$ & $(\pm 1, \pm 4), ( \pm 2, \pm 1), (\mp 3, \mp 5)$\\ \hline
\multirow{2}{3.5em}{$(7,  \pm 13)$} & $( \pm 3,  \pm 10), (\pm 2,  \pm 7), ( \pm 3, \pm 4), (\pm 4, \pm 1),$ \\ & $( \pm 3,  \pm 1), (\mp 1, \pm 1), (\mp 2, \mp 5), (\mp 5,\mp 8), (\mp 7, \mp 11)$ \\ \hline
$(7, \pm 29)$ & $\begin{aligned}(\mp 6, \mp 1), (\mp 5, &\mp 16), (\mp 4, \mp 7), (\pm 1, \pm 5), \\&(\pm 3, \pm 2), (\pm 11, \pm 17)\end{aligned}$\\ \hline
$(11, \pm 11), (19, \pm 19),$  &\multirow{2}{3.5em}{$( \pm 1,\pm 4)$}\\ 
$(23, \pm 23), (31, \pm 31)$ &\\ \hline
$(11, \pm 23)$ & $( \pm 3, \pm 2), ( \pm 2, \pm 1), (\mp 2,\mp 3)$\\ \hline
$(13, 13), (17, 17), (29, 29), (37,37)$ & $(-1, -4), (1,4)$\\ \hline
$(13, -13),  (17, -17),$  & \multirow{2}{3.5em}{$\varnothing$}\\ 
$(29, -29), (37,-37)$ &\\ \hline
$(19, \pm 37)$ & $(\mp 2, \mp 5)$ \\ \hline
\end{tabular}
\medskip
\captionof{table}{\textit{Solutions to the Thue equations where $D=\pm \ell$ and $7\leq \ell \leq 37$} \cite{BCOT}. }
\label{thuetable}
\end{table}
\end{center}
\endgroup

\begingroup
\setlength{\tabcolsep}{5pt} 
\renewcommand{\arraystretch}{1.9}
\begin{center} 
\begin{table}[!ht]
\begin{tabular}{|c|c|}
\multicolumn{2}{c}{} \\ \hline
$(d, D)$ & Integer Solutions to $F_{d-1}(X,Y)=D$  \\ \hline \hline
$(7, \pm 41) $ & $(\mp 3, \mp 7), (\mp 1, \pm 2), (\pm 4, \pm 5)  $\\ \hline
$\begin{aligned}&(41, 41), (53, 53), (61,61), \\
&(73, 73), (89,89), (97, 97)\end{aligned}$ & $(-1, -4), (1,4)$ \\ \hline
$(41, -41), (23, \pm 47), (13, 53), (53,-53), (29, \pm 59),$ &  \multirow{3}{3.5em}{$\varnothing$} \\ 
$(31, \pm 61), (61, -61), (17,-67), (37, \pm 73), (73,-73),$ & \\
$(13,-79), (41, \pm 83), (89,-89), (97,-97)$ & \\   \hline
$(7,  \pm 43)$ & $(\mp 3, \mp 8), (\mp 2, \pm 1), (\pm 5, \pm 7) $\\ \hline
$(11, \pm 43)$ & $(\mp 3, \mp 5), (\pm 2, \pm 5)$\\ \hline
$(43, \pm 43), (47, \pm 47), (59, \pm 59), (67,\pm 67),$ &  \multirow{2}{3.5em}{$( \pm 1,\pm 4)$}\\
$(71, \pm 71), (79, \pm 79), (83, \pm 83)$ &  \\ \hline
$(13, -53), (17,67)$ & $ (-2,-3), (2, 3)$\\ \hline
$(11, \pm 67)$ & $(\mp 7, \mp 12), (\mp 3, \mp 11), (\mp 2, \mp 7)$ \\ \hline

\multirow{2}{3.5em}{$(7,\pm 71)$} & $(\mp 16, \mp 25), (\mp 5, \mp 9), (\pm 1, \pm 6),$\\ & $(\pm 4, \pm 3), (\pm 7, \pm 23), (\pm 9, \pm 2)$\\ \hline
$(13, 79)$ & $(-2,-5),(2,5)$\\ \hline

\multirow{2}{3.5em}{$(7,\pm 83)$} & $(\mp 8, \mp 13), (\mp 7, \mp 1), (\mp 6, \mp 19),$\\
&  $(\pm 3, \pm 11), (\pm 5, \pm 2), (\pm 13, \pm 20)$\\ \hline
$(11, \pm 89)$ & $(\mp 1, \pm 1) $\\ \hline
$(7,\pm 97)$ & $(\mp 4, \mp 11), (\mp 3, \pm 1), (\pm 7, \pm 10)$\\ \hline

\end{tabular}
\medskip
\captionof{table}{\textit{Solutions (assuming GRH) to the Thue equations where $D=\pm \ell$ and $41\leq \ell \leq 97$ \cite{BCOT}.}}
\label{thueGRHtable}
\end{table}
\end{center}
\endgroup

\begingroup
\setlength{\tabcolsep}{5pt} 
\renewcommand{\arraystretch}{1.9}
\begin{center} 
\begin{table}[!ht]
\begin{tabular}{|c|c|}
\multicolumn{2}{c}{} \\ \hline
$\ell$ & Integer Solutions to $C^-_4:y^4- 3x^2y^2+x^4=\ell$  \\ \hline \hline
$5 $ & $(\pm 1,-2),(\pm 2,-1),(\pm 2,1),(\pm 1,2)  $\\ \hline
$31$ & $(\pm 3,-5),(\pm 5,-3),(\pm 5,3),(\pm 3,5)$\\ \hline
$11,19,29,41,59,61,71,79,89 $ & $\emptyset$\\ \hline

\end{tabular}
\medskip
\captionof{table}{\textit{}}
\label{C_4^-table}
\end{table}
\end{center}
\endgroup

\begingroup
\setlength{\tabcolsep}{5pt} 
\renewcommand{\arraystretch}{1.9}
\begin{center} 
\begin{table}[!ht]
\begin{tabular}{|c|c|}
\multicolumn{2}{c}{} \\ \hline
$\ell$ & Integer Solutions to $C^-_4:y^4- 3x^2y^2+x^4=\ell$  \\ \hline \hline
$-11 $ & $(\pm 2,-3),(\pm 3,-2),(\pm 3,2),(\pm 2,3)$ \\ \hline
$-79 $ & $(\pm 5,-8),(\pm 8,-5),(\pm 8,5),(\pm 5,8)$\\ \hline
$-5,-19,-29,-31,-41,-59,-61,-71,-89 $ & $\emptyset$\\ \hline

\end{tabular}
\medskip
\captionof{table}{\textit{}}
\label{negativeC_4^-table}
\end{table}
\end{center}
\endgroup

\begingroup
\setlength{\tabcolsep}{5pt} 
\renewcommand{\arraystretch}{1.9}
\begin{center} 
\begin{table}[!ht]
\begin{tabular}{|c|c|}
\multicolumn{2}{c}{} \\ \hline
$\ell$ & Integer Solutions to $C^+_4:y^4+ 3x^2y^2+x^4=\ell$  \\ \hline \hline
$5 $ & $(\pm 1,-1),(\pm 1,1)$\\ \hline
$29$ & $(\pm 1,-2),(\pm 2,-1),(\pm 2,1),(\pm 1,2)$\\ \hline
$11,19,31,41,59,61,71,79,89 $ & $\emptyset$\\ \hline
\end{tabular}
\medskip
\captionof{table}{\textit{}}
\label{C_4^+table}
\end{table}
\end{center}
\endgroup

\begingroup
\setlength{\tabcolsep}{5pt} 
\renewcommand{\arraystretch}{1.9}
\begin{center} 
\begin{table}[!ht]
\begin{tabular}{|c|c|}
\multicolumn{2}{c}{} \\ \hline
$\ell$ & Integer Solutions to $y^4-3xy^2+x^2=\ell$  \\ \hline \hline
$11$ & $(-2,\pm 1),(5,\pm 1)$\\ \hline
$-19$ & $(5,\pm 2),(7,\pm 2)$\\ \hline
$29$ & $(-4,\pm 1),(-1,\pm 2),(7,\pm 1),(13,\pm 2)$\\ \hline
$-31$ & $(7,\pm 4),(19,\pm 7),(41,\pm 4),(128,\pm 7)$\\ \hline
$41$ & $(-5,\pm 1),(5,\pm 4),(8,\pm 1),(43,\pm 4)$\\ \hline
$59$ & $(46,\pm 11),(317,\pm 11)$\\ \hline
$-61$ & $\emptyset$\\ \hline
$71$ & $(-7,\pm 1),(10,\pm 1),(202,\pm 23),(1385,\pm 23)$\\ \hline
$-79$ & $(11,\pm 5),(25,\pm 8),(64,\pm 5),(167,\pm 8)$\\ \hline
$89$ & $(-8,\pm 1),(8,\pm 5),(11,\pm 1),(67,\pm 5)$\\ \hline
\end{tabular}
\medskip
\captionof{table}{\textit{} }
\label{Weight2}
\end{table}
\end{center}
\endgroup

\end{document}